\patchcmd{\section}{\scshape}{\bfseries}{}{} \makeatletter
\renewcommand{\@secnumfont}{\bfseries} \makeatother
\makeatletter \renewenvironment{proof}[1][\proofname]
{\par\pushQED{\qed}\normalfont\topsep6\p@\@plus6\p@\relax\trivlist\item[\hskip\labelsep\bfseries#1\@addpunct{.}]\ignorespaces}{\popQED\endtrivlist\@endpefalse}
\renewcommand{\leq}{\leqslant} \renewcommand{\geq}{\geqslant}
\theoremstyle{plain} \newtheorem{theorem}{Theorem}[section]
\theoremstyle{plain} \newtheorem*{theorem*}{Theorem}
\theoremstyle{plain} 
\theoremstyle{plain} \newtheorem*{example*}{Example}
\theoremstyle{definition} \newtheorem*{question*}{Question}
\theoremstyle{definition} 
\theoremstyle{definition} \newtheorem{remark}[theorem]{Remark}
\theoremstyle{plain} \newtheorem{lemma}[theorem]{Lemma}
\theoremstyle{plain} 
\theoremstyle{plain} \newtheorem*{conjecture*}{Conjecture}
\theoremstyle{plain} \newtheorem{proposition}[theorem]{Proposition}
\theoremstyle{plain} \newtheorem{corollary}[theorem]{Corollary}
\theoremstyle{definition} \newtheorem*{remark*}{Remark}
\newcommand{\bad}{\mathbf{BA}}
\newcommand{\well}{\mathbf{WA}}
\newcommand{\bone}{\mathbb{1}}
\newcommand{\DD}{\mathbb{D}} 
\newcommand{\NN}{\mathbb{N}}
\newcommand{\QQ}{\mathbb{Q}} 
\newcommand{\RR}{\mathbb{R}}
\newcommand{\WW}{\mathbb{W}}
\newcommand{\ZZ}{\mathbb{Z}}
\newcommand{\calD}{\mathcal D}
\newcommand{\bk}{\mathbf k}
\newcommand{\fixx}{\Phi}
\newcommand{\fixy}{\Theta} 
\newcommand{\recips}{\mathcal D'}
\newcommand{\simplex}{\mathcal R}
\newcommand{\comp}{^{\mathsf{c}}}
\DeclarePairedDelimiter{\norm}{\lVert}{\rVert}
\DeclarePairedDelimiter{\abs}{\lvert}{\rvert}
\DeclarePairedDelimiter{\set}{\lbrace}{\rbrace}
\DeclarePairedDelimiter{\parens}{\lparen}{\rparen}
\DeclareMathOperator{\Leb}{Leb}
\def\eps{{\varepsilon}}
\title[A characterization of bad approximability]{A characterization of bad approximability}
\author[F.~A.~Ram{\'i}rez]{Felipe A.~Ram{\'i}rez} \date{}
\keywords{Inhomogeneous approximation, Diophantine approximation,
  badly approximable vectors, Kurzweil's Theorem} \address{Wesleyan University, Middletown, CT, USA}
\email{framirez@wesleyan.edu}
\dedicatory{For Joaqu{\'i}n Alberto.}
\begin{document}

\frenchspacing

\begin{abstract}
  We show that badly approximable vectors are exactly those that
  cannot, for any inhomogeneous parameter, be inhomogeneously
  approximated at every monotone divergent rate. This implies in
  particular that Kurzweil's Theorem cannot be restricted to any
  points in the inhomogeneous part. Our results generalize to weighted
  approximations, and to higher irrationality exponents.
\end{abstract}

\maketitle




\section{Informal introduction}
\label{sec:intro}

One of the most fundamental objects of study in classical and modern
Diophantine approximation is the set of \emph{badly approximable
  vectors} in $\RR^d$, that is, the set
\begin{equation*}
  \bad \overset{\textrm{def}}{=} \set{x\in\RR^d : \inf_{n\in\NN} n\,\norm{nx}^d >0},
\end{equation*}
where $\norm{\cdot}$ denotes sup-norm distance to $\ZZ^d$. The set
$\bad$ has several well-known and important properties, the most
often-cited perhaps being that it has zero Lebesgue measure while
having full Hausdorff dimension. Research into the structure of $\bad$
and its generalizations continues to this day.

This paper is inspired by a question regarding Kurzweil's
Theorem~\cite{Kurzweil}, a classical result that has received renewed
attention in the past decade~\cite{ChaikaKurzweil, FayadMSTP,
  FuchsKim, Harraptwisted, Kim, SimmonsKurzweil,
  TsengSTPs}. Kurzweil's Theorem is stated in~\S\ref{sec:results}, but
for the moment it is worth mentioning that it gives an alternate
characterization of bad approximability; it is remarkable because it
uses inhomogeneous approximations, that is, expressions of the form
$\norm{nx + y}$ where $x,y\in\RR^d$, to define $\bad$, a set whose
standard definition uses only homogeneous approximations, meaning
expressions of the form $\norm{nx}$. The question is: \emph{Does
  Kurzweil's characterization of $\bad$ persist if we restrict
  inhomogeneous parameters $y$ to lie on a submanifold of $\RR^d$?}

The question itself fits into a long-running tradition of determining
to what extent classical results in Diophantine approximation survive
restriction to subsets of $\RR^d$. For more instances of this
tradition, see~\cite{AnBerVel, BadVelbadoncurves, Bermanifolds,
  BeresnevichBADonM, BDVplanarcurves, Ghoshhyperplanes, Kle03, KM98,
  hyperplanes, forall, Spr69, VV06}. Notice that the results in this
list often depend on non-degeneracy (a curvature condition) of the
manifold to which we are restricting, and in the degenerate cases
(say, affine subspaces) results tend to depend on the Diophantine
properties of the subspace. One may be tempted to venture a guess:
that the answer to the above question is ``yes'' if we restrict to
non-degenerate manifolds of $\RR^d$, or if we restrict to degenerate
manifolds satisfying some Diophantine-type condition.

In this note we consider the extreme degenerate case of singletons
(connected zero-dimensional manifolds). Here, the answer is: \emph{No,
  the opposite is true}. When one tries to restrict to points in the
inhomogeneous part, one finds instead a new characterization of bad
approximability which is intuitively the opposite of Kurzweil's
Theorem. In particular, the degenerate part of the above guess is
totally wrong for points.

In \cite{Harraptwisted} and \cite{TsengSTPs}, S.~Harrap and J.~Tseng
generalize Kurzweil's Theorem to \emph{weighted bad approximability}
and to \emph{bad approximability with respect to higher irrationality
  exponents}, respectively. The results in this paper have analogous
generalizations.

\section{Statement of results}
\label{sec:results}

Let $d\in\NN$. For a function $\psi:\NN\to\RR_{\geq 0}$, let 
\begin{equation*}
  W(\psi) \overset{\textrm{def}}{=} \set*{(x,y)\in\RR^d\times\RR^d : (\exists^\infty n\in\NN)\, \norm{nx+y}<\psi(n)}
\end{equation*}
denote the set of inhomogeneously $\psi$-approximable pairs in
$\RR^{2d}$. Let $\calD$ denote the set of all non-increasing
$\psi:\NN\to\RR_{\geq 0}$ such that $\sum_n\psi(n)^d$ diverges, and
let
\begin{equation*}
  \Pi \overset{\textrm{def}}{=} \bigcap_{\psi\in\calD} W(\psi). 
\end{equation*}
Denote by $\pi$ the projection $\RR^d\times\RR^d\to\RR^d$ to the first
copy of $\RR^d$. That is, $\pi(x,y) = x$. Let
\begin{equation*}
  V(\psi) \overset{\textrm{def}}{=} \set*{x\in\RR^d : (\textrm{a.e. } y\in\RR^d)(\exists^\infty n\in\NN)\, \norm{nx+y}<\psi(n)}.
\end{equation*}
Finally, let $\well$ denote
the complement of $\bad$, that is, the well approximable vectors. 

\theoremstyle{plain}
\newtheorem*{kurzweil}{Kurzweil's Theorem}
\begin{kurzweil}[1955]
  \begin{equation*}
    \bad = \bigcap_{\psi\in\calD}V(\psi).
  \end{equation*}
\end{kurzweil}

\begin{remark*}
  Note that the Doubly Metric Inhomogeneous Khintchine
  Theorem~\cite{CasselsintrotoDA} asserts that $W(\psi)$ has full
  Lebesgue measure in $\RR^d\times\RR^d$ for every $\psi\in\calD$,
  hence, $V(\psi)$ has full measure in $\RR^d$ by Fubini's Theorem.
\end{remark*}

The following question asks whether Kurzweil's Theorem can be
restricted to subsets of the inhomogeneous $\RR^d$-component. It was
originally asked by S.~Velani in the context of non-degenerate
manifolds, where it is sensible to think the answer might be 'yes.' We
phrase the question for general subsets of $\RR^d$, allowing that in
this generality it is not expected that the answer will
always be affirmative.

\begin{question*}
  Let $M\subset \RR^d$ be some subset (say, an affine subspace, or any
  manifold, or a fractal) supporting a measure $\mu_M$. Can we restrict
  Kurzweil's Theorem to $M$ in the inhomogeneous part? That is,
  denoting
\begin{equation*}
  V^M(\psi) \overset{\textrm{def}}{=} \set*{x\in\RR^d : (\mu_M\textrm{-a.e. } y\in\RR^d)(\exists^\infty n\in\NN)\, \norm{nx+y}<\psi(n)}
\end{equation*}
and
  \begin{equation*}
 \fixy(M) \overset{\textrm{def}}{=} \bigcap_{\psi\in\calD}V^M(\psi),
  \end{equation*}
  do we have $\bad = \fixy(M)$ as in Kurzweil's Theorem? And if not,
  what do we have instead?
\end{question*}

Our main finding is that in the case that $M=\set{y}$ is a single point with the
Dirac measure $\mu=\delta_y$, the answer is ``no.'' Not only does the set
$\fixy(y)$ always fail to coincide with $\bad$, it never contains any
badly approximable vectors at all. Furthermore, \emph{exclusion from
  $\fixy(y)$ for all $y\in\RR^d$ characterizes bad
  approximability}. We prove the following theorem.
\begin{theorem}\label{thm}
  \begin{equation*}
\well = \pi\parens*{\Pi} \qquad\textrm{hence}\qquad \bad = \RR^d\setminus \pi\parens*{\Pi}.
\end{equation*}
\end{theorem}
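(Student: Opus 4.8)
The plan is to prove the two inclusions $\pi(\Pi)\subseteq\well$ and $\well\subseteq\pi(\Pi)$ separately; Kurzweil's Theorem is not needed. For a fixed pair $(x,y)$ write $a_n:=\norm{nx+y}$. The organizing idea is that whether $(x,y)$ lies in $W(\psi)$ for \emph{every} $\psi\in\calD$ is controlled by how small the non-increasing minorants of $(a_n)$ can be made: if $\varphi$ is non-increasing with $\varphi(n)\le a_n$ eventually then $(x,y)\notin W(\varphi)$, and conversely any $\psi\in\calD$ with $(x,y)\notin W(\psi)$ is such a minorant on a tail. The largest non-increasing minorant of $(a_m)_{m\ge N}$ is $n\mapsto\min_{N\le m\le n}a_m$, so the whole question reduces to the growth of the running minima of $\norm{nx+y}$.

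For $\pi(\Pi)\subseteq\well$ --- equivalently $\bad\cap\pi(\Pi)=\emptyset$ --- I would fix $x\in\bad$, set $c:=\inf_{n}n\norm{nx}^d>0$, fix an arbitrary $y$, and produce a single $\psi\in\calD$ with $(x,y)\notin W(\psi)$. Since $x\in\bad$ forces $x\notin\QQ^d$, there is at most one $m_0$ with $m_0x+y\in\ZZ^d$; let $N$ exceed it (or $N=1$ if there is none), so $a_m>0$ for $m\ge N$, and put $\psi(n):=\min_{N\le m\le n}a_m$ for $n\ge N$ and $\psi(n):=a_N$ for $n<N$. This $\psi$ is non-increasing, positive, and $\le a_n$ for $n\ge N$, so once we know $\sum_n\psi(n)^d=\infty$ we get $\psi\in\calD$ and $(x,y)\notin W(\psi)$, hence $(x,y)\notin\Pi$; as $y$ is arbitrary this gives $x\notin\pi(\Pi)$. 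The divergence is the heart of the argument. Let $N=\ell_1<\ell_2<\cdots$ be the successive record indices of $(a_m)_{m\ge N}$ (so $\ell_{j+1}$ is the least $m>\ell_j$ with $a_m<a_{\ell_j}$), and $t_j:=a_{\ell_j}$, so $\psi\equiv t_j$ on $[\ell_j,\ell_{j+1})$ and $t_1>t_2>\cdots>0$; if there are finitely many records $\psi$ is eventually a positive constant and the series diverges, so assume infinitely many. By the triangle inequality for distance to $\ZZ^d$, $\norm{(\ell_{j+1}-\ell_j)x}\le t_{j+1}+t_j\le 2t_j$, while badness gives $\norm{(\ell_{j+1}-\ell_j)x}\ge(c/\ell_{j+1})^{1/d}$ (using $\ell_{j+1}-\ell_j\le\ell_{j+1}$); hence $t_j^d\ge c/(2^d\ell_{j+1})$ and
\[
  \sum_n\psi(n)^d=\sum_j(\ell_{j+1}-\ell_j)\,t_j^d\;\ge\;\frac{c}{2^d}\sum_j\Bigl(1-\frac{\ell_j}{\ell_{j+1}}\Bigr),
\]
which diverges because $\prod_j(\ell_j/\ell_{j+1})=\lim_J\ell_1/\ell_{J+1}=0$.

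For $\well\subseteq\pi(\Pi)$ I would construct the inhomogeneous point explicitly. Given $x\in\well$ there are positive integers $q_k\to\infty$ with $q_k\norm{q_kx}^d\to0$ (for $x\in\QQ^d$ take the $q_k$ to be multiples of a denominator, so $\norm{q_kx}=0$; otherwise such $q_k$ must tend to infinity). Passing to a subsequence, assume $q_k\norm{q_kx}^d\le2^{-k}$ and $\norm{q_{k+1}x}\le\tfrac12\norm{q_kx}$, and set $\eta_k:=\norm{q_kx}$ (so $\eta_k\le2^{-k/d}$). Choose integer vectors $p_k$ with $\delta_k:=q_kx-p_k$ of sup-norm $\eta_k$, and let $y:=-\sum_{k}\delta_k$ (convergent) and $n_k:=q_1+\cdots+q_k$. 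Since $n_kx+y-(p_1+\cdots+p_k)=-\sum_{i>k}\delta_i$, one gets $\norm{n_kx+y}\le\sum_{i>k}\eta_i\le2\eta_{k+1}$, whence $\sum_k q_{k+1}\norm{n_kx+y}^d\le2^d\sum_k q_{k+1}\eta_{k+1}^d<\infty$. Now for any $\psi\in\calD$: if $\norm{n_kx+y}\ge\psi(n_k)$ for all large $k$ then, since $n_{k+1}-n_k=q_{k+1}$ and $\psi$ is non-increasing,
\[
  \sum_{n\ge n_K}\psi(n)^d\;\le\;\sum_{k\ge K}q_{k+1}\psi(n_k)^d\;\le\;\sum_{k\ge K}q_{k+1}\norm{n_kx+y}^d\;<\;\infty,
\]
contradicting divergence of $\sum_n\psi(n)^d$; so $\norm{n_kx+y}<\psi(n_k)$ infinitely often, i.e. $(x,y)\in W(\psi)$. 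As $\psi\in\calD$ was arbitrary, $(x,y)\in\Pi$ and $x=\pi(x,y)\in\pi(\Pi)$.

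I expect the main obstacle to be the first inclusion: recognizing that ``blocking'' $(x,y)$ from $\Pi$ is the same as showing the running minimum of $\norm{nx+y}$ has divergent $d$-th power sum, and then extracting from badness the fact that the record indices cannot grow fast enough for $\sum_j(1-\ell_j/\ell_{j+1})$ to converge. The construction in the second inclusion is the familiar device of summing a lacunary subsequence of best approximations; its only technical wrinkle --- the at most one solution of $mx+y\in\ZZ^d$ --- is harmless.
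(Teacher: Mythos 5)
Your proof is correct and follows essentially the same route as the paper: you reduce membership in $\Pi$ to the running minima $\min_{\ell\le m\le n}\norm{mx+y}$ (the paper's sums $S_\ell$ and Lemma~\ref{lem:SPis}), use bad approximability exactly as in Lemma~\ref{lem:badreturn} to force divergence along the record/best inhomogeneous approximation indices, and for $x\in\well$ construct $y=-\sum_k(q_kx-p_k)$ and the partial sums $n_k$ just as the paper does. The only cosmetic difference is your last divergence step (bounding $t_j^d\ge c/(2^d\ell_{j+1})$ and invoking the telescoping product for $\sum_j(1-\ell_j/\ell_{j+1})$), where the paper argues more directly that each block contributes at least $c/2^d$ because the gap satisfies $\ell_{j+1}-\ell_j\ge c/(2^d t_j^d)$.
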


\begin{remark*}
Denoting
\begin{equation*}
\fixx(x) \overset{\textrm{def}}{=} \set*{y\in\RR^d : (x,y)\in\Pi},
\end{equation*}
  the theorem can be restated as
  \begin{equation*}
    x\in\bad \qquad\iff\qquad \fixx(x) = \emptyset.\tag{Thm.~\ref{thm}}
  \end{equation*}
  From the proof it is evident that $\fixx(x)$ is uncountable if
  $x\in\well\setminus\QQ^d$. We conjecture that $\fixx(x)$ is
  countable if $x\in\QQ^d$, as is the case when
  $d=1$. (See Theorem~\ref{thm:fixxcardinality}.)
\end{remark*}

\begin{remark*}
  Building on the previous remark, it should be noted that $\fixx(x)$
  and $\fixy(y)$ are ``vertical'' and ``horizontal'' fibers (respectively) of
  $\Pi$. Specifically, viewing $\Pi$ as a subset of
  $\RR^d\times\RR^d$, we see that $\fixx(x)$ is the projection of
  $\Pi\cap\parens*{\set{x}\times \RR^d}$ to the second $\RR^d$ factor,
  and $\fixy(y)$ is the projection of
  $\Pi\cap\parens*{\RR^d\times\set{y}}$ to the first $\RR^d$
  factor. Putting it another way, we have
  \begin{equation*}
    \Pi = \bigcup_{x\in\RR^d}\parens*{\set{x}\times\fixx(x)}= \bigcup_{y\in\RR^d}\parens*{\fixy(y)\times\set{y}}.
  \end{equation*}
  This point of view is especially apparent in~\S\ref{sec:thm2}.
\end{remark*}

Note that by the Doubly Metric Inhomogeneous Khintchine
Theorem~\cite{CasselsintrotoDA}, the set $W(\psi)$ has full Lebesgue measure
for every $\psi\in\calD$. But $\Pi$ is an uncountable
intersection, so it is not guaranteed that it will also have full
measure, or that it is even measurable. We prove the following.

\begin{theorem}\label{thm2}
  The set $\Pi$ is Lebesgue measurable and has measure $0$. In fact,
  for every $x\in\RR^d$, the set $\fixx(x)\subset\RR^d$ has Lebesgue
  measure $0$.
\end{theorem}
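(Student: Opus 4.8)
The plan is to prove two things: (a) $\Pi$ is Lebesgue measurable, and (b) $\fixx(x)$ is Lebesgue null for every $x\in\RR^d$. Granting both, Tonelli's theorem applied to $\bone_\Pi$ yields $\Leb(\Pi)=\int_{\RR^d}\Leb(\fixx(x))\,dx=0$, so these suffice. Throughout I treat the inhomogeneous variable as living on $\TT^d=\RR^d/\ZZ^d$, since $\norm{nx+y}$ depends on $y$ only modulo $\ZZ^d$; in particular $W_x(\psi):=\set*{y:(x,y)\in W(\psi)}$ and $\fixx(x)$ are $\ZZ^d$-periodic. For (a) I would observe that $\Pi$ is co-analytic: regarding $\psi$ as an element of $\RR^{\NN}$, the conditions ``$\psi\in\calD$'' and ``$(x,y)\in W(\psi)$'' are Borel in $(x,y,\psi)\in\RR^{2d}\times\RR^{\NN}$, so $\Pi\comp=\set*{(x,y):\exists\psi\in\calD,\ (x,y)\notin W(\psi)}$ is a coordinate projection of a Borel set, hence analytic, whence $\Pi$ is Lebesgue measurable. (One can instead argue concretely that, away from a fixed Borel null set, $(x,y)\in\Pi\iff\sum_n(\min_{1\le m\le n}\norm{mx+y})^d<\infty$, which defines an $F_\sigma$ set by lower semicontinuity.)

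For (b), fix $x$ and let $T=T_x$ be the translation by $x$ on $\TT^d$. The key observation is that $\calD$ is stable under shifting the argument: $\psi\in\calD\Rightarrow\psi(\,\cdot+k)\in\calD$ for every $k\ge 0$. Fixing any $\psi\in\calD$ and setting $A:=W_x(\psi)$, one checks that $T^kA=W_x(\psi(\,\cdot+k))$; since $\psi$ is non-increasing the sets $T^kA$ decrease, so $A_\infty:=\bigcap_{k\ge 0}T^kA$ is $T$-invariant, and since every $\psi(\,\cdot+k)$ lies in $\calD$ we get $\fixx(x)\subseteq\bigcap_k W_x(\psi(\,\cdot+k))=A_\infty$. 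The argument then branches according to whether $1,x_1,\dots,x_d$ are $\QQ$-linearly independent.

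If they are $\QQ$-dependent (this includes $x\in\QQ^d$), then $H:=\overline{\set*{nx\bmod\ZZ^d:n\in\ZZ}}$ is a proper closed subgroup of $\TT^d$, so $\Leb(H)=0$; and if $y\notin H$ then, since $nx\in H$, we have $\norm{nx+y}\ge\operatorname{dist}_{\TT^d}(nx+y,H)=\operatorname{dist}_{\TT^d}(y,H)=:\delta>0$ for all $n$, so $(x,y)\notin W(\tfrac{\delta}{2}\bone)$ although $\tfrac{\delta}{2}\bone\in\calD$; thus $\fixx(x)\subseteq H$ and (b) follows. If instead $1,x_1,\dots,x_d$ are $\QQ$-independent, then $T$ is ergodic, so $\Leb(A_\infty)\in\set{0,1}$ for any choice of $\psi$. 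If $x\in\bad$ then $\fixx(x)=\emptyset$ by Theorem~\ref{thm}; if $x\in\well$, then Kurzweil's Theorem provides $\psi\in\calD$ with $x\notin V(\psi)$, i.e.\ $W_x(\psi)$ is not of full measure in $\TT^d$, so for this $\psi$ we get $\Leb(A_\infty)\le\Leb(W_x(\psi))<1$, forcing $\Leb(A_\infty)=0$ and hence $\Leb(\fixx(x))=0$.

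The main obstacle is step (b) in the ergodic, well-approximable case: a single $W_x(\psi)$ may have full measure — by the Doubly Metric Inhomogeneous Khintchine Theorem~\cite{CasselsintrotoDA} it does for Lebesgue-a.e.\ $x$ and every $\psi\in\calD$ — so no individual approximating function can detect the thinness of $\fixx(x)$. The shift-stability of $\calD$ is exactly what allows one to replace $W_x(\psi)$ by the genuinely $T_x$-invariant superset $A_\infty$ of $\fixx(x)$, after which ergodicity collapses $A_\infty$ to a null set as soon as Kurzweil's Theorem furnishes one $\psi$ with $W_x(\psi)$ not of full measure. The $\QQ$-dependent case must be treated on its own, since there the zero--one law for $A_\infty$ is unavailable, and the badly approximable case is precisely where Theorem~\ref{thm} is indispensable: for such $x$ even $A_\infty$ has full measure, so only the already-established emptiness of $\fixx(x)$ closes the argument.
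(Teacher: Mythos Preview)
Your proof is correct and follows essentially the same route as the paper: coanalyticity for measurability, the proper-subgroup argument when the orbit of $x$ is not dense, and otherwise an ergodicity-plus-Kurzweil dichotomy (with Theorem~\ref{thm} handling $x\in\bad$). The only cosmetic difference is that the paper observes directly that $\fixx(x)+x\subseteq\fixx(x)$, making $\fixx(x)$ itself the $T_x$-invariant set to which the $0$--$1$ law applies, whereas you build the invariant superset $A_\infty=\bigcap_k T^k W_x(\psi)$; both rest on the same shift-stability of $\calD$.
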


This theorem implies that for almost every $y\in\RR^d$, the measure of
$\fixy(y)$ is $0$. In Lemma~\ref{lem:Piy01} we show that $\fixy(y)\cap[0,1]^d$
always has measure $0$ or $1$. We ask: Do there exist
  $y\in\RR^d$ for which $\fixy(y)$ has full measure?

Finally, we extend these results to the cases of weighted
approximation (\S\ref{sec:weights}) and higher irrationality exponents
(\S\ref{sec:exponents}). These results appear as
Theorems~\ref{thm:weigthed},~\ref{thm2:weighted},~\ref{thm:exponents},
and~\ref{thm2:exponents}.

\begin{remark}\label{rem:pos}
  Before moving on to the proofs of the main results, we should draw
  attention to a subtlety that arises in the passage from homogeneous
  to inhomogeneous approximations. Namely, that $\norm{nx+y}$ is
  generally not equal to $\norm{(-n)x+y}$. This means that in any
  discussion about inhomogeneous approximations, one must choose
  whether one wants to allow integer values for $n$ or only consider
  positive integer values. We have chosen to consider only positive
  integer values of $n$, (so, for example, our sets $W(\psi)$ are not
  invariant under the maps $(x,y) \mapsto (-x,y)$ and
  $(x,y)\mapsto (x,-y)$, though they are invariant under the map
  $(x,y)\mapsto (-x, -y)$.) This is consistent with the convention
  used in Kurzweil's work~\cite{Kurzweil}.
\end{remark}

\section{Proof of Theorem~\ref{thm}}
\label{sec:proof}

For $x,y\in\RR^d$, $\ell\in\ZZ$, and $d\in\NN$, let 
\begin{equation*}
  S_\ell (x,y) \overset{\textrm{def}}{=} \sum_{n\geq \ell}\min_{\ell\leq m\leq n}\norm{mx+y}^d.
\end{equation*}
These sums will play an essential role here, especially through the
use of Lemma~\ref{lem:SPis}. Note that, even though we have defined
$S_\ell$ for any $\ell\in\ZZ$, we will really only need to work with
$\ell\in\NN$ (see Remark~\ref{rem:pos}). Still, there is no harm in
stating some of our simple facts about $S_\ell$---namely
Lemmas~\ref{lem:ellplusone},~\ref{lem:invariance},
and~\ref{lem:xirrationaly}---for any $\ell\in\ZZ$.

The following lemma shows that membership in $\Pi$ is equivalent to
convergence of $S_\ell$.

\begin{lemma}\label{lem:SPis}
\begin{equation*}
   (x,y)\in\Pi \quad\iff\quad S_\ell(x,y) < \infty \quad (\forall \ell\in\NN).
  \end{equation*}
\end{lemma}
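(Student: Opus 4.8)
The plan is to prove both implications by contraposition, the whole argument hinging on a single device: for $\ell\in\NN$, the \emph{running minimum}
\[
\psi_\ell(n) \overset{\textrm{def}}{=} \Bigl(\min_{\ell\le m\le n}\norm{mx+y}^d\Bigr)^{1/d}\qquad(n\ge\ell),
\]
extended to a function on all of $\NN$ by declaring $\psi_\ell(n)=\psi_\ell(\ell)$ for $n<\ell$. By construction $\psi_\ell$ is non-increasing on $\NN$, one has $\psi_\ell(n)\le\norm{nx+y}$ for every $n\ge\ell$, and $\sum_{n\in\NN}\psi_\ell(n)^d = (\ell-1)\psi_\ell(\ell)^d + S_\ell(x,y)$. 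Thus $\psi_\ell$ lies in $\calD$ exactly when $S_\ell(x,y)=\infty$, and this is the observation that makes everything go.

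For ``$\Leftarrow$'' I argue as follows. If $(x,y)\notin\Pi$, then $(x,y)\notin W(\psi)$ for some $\psi\in\calD$, so there is $\ell\in\NN$ with $\norm{nx+y}\ge\psi(n)$ for all $n\ge\ell$. Since $\psi$ is non-increasing, $\min_{\ell\le m\le n}\norm{mx+y}^d\ge\min_{\ell\le m\le n}\psi(m)^d=\psi(n)^d$ for each $n\ge\ell$; summing over $n\ge\ell$ gives $S_\ell(x,y)\ge\sum_{n\ge\ell}\psi(n)^d=\infty$, a tail of a divergent series of nonnegative terms being again divergent. This contradicts the hypothesis that $S_\ell(x,y)<\infty$ for all $\ell$.

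For ``$\Rightarrow$'' I argue the contrapositive: suppose $S_\ell(x,y)=\infty$ for some $\ell\in\NN$. Then $\norm{mx+y}>0$ for every $m\ge\ell$, for otherwise every term of $S_\ell$ of index $\ge m$ would vanish and $S_\ell$ would reduce to a finite sum. Hence $\psi_\ell$ is positive and, by the displayed identity above, lies in $\calD$. Since $\norm{nx+y}\ge\psi_\ell(n)$ for every $n\ge\ell$, the set of $n$ with $\norm{nx+y}<\psi_\ell(n)$ is contained in $\{1,\dots,\ell-1\}$, hence finite, so $(x,y)\notin W(\psi_\ell)\supseteq\Pi$; that is, $(x,y)\notin\Pi$.

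I do not expect a genuine obstacle here: all of the content is in realizing that, given $(x,y)$, the extremal approximating function compatible with monotonicity is precisely the running minimum of $\norm{nx+y}$ beyond index $\ell$, and that the sums $S_\ell$ encode exactly whether such a function witnesses failure of divergence. The only points that need a little care are bookkeeping ones — extending $\psi_\ell$ to an honestly non-increasing function on all of $\NN$, disposing of the degenerate case $\norm{mx+y}=0$, and noting that membership in $\Pi$ depends only on the values $\norm{nx+y}$ for large $n$, so that finitely many exceptions are harmless.
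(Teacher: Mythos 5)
Your proposal is correct and follows essentially the same route as the paper: the key construction in both is the running minimum $\psi_\ell(n)=\min_{\ell\le m\le n}\norm{mx+y}$ (extended to a non-increasing element of $\calD$ exactly when $S_\ell(x,y)=\infty$), which witnesses $(x,y)\notin W(\psi_\ell)$. The only cosmetic difference is that you handle the other direction contrapositively via the termwise bound $S_\ell(x,y)\ge\sum_{n\ge\ell}\psi(n)^d$, whereas the paper argues directly that convergence of $S_\ell$ against divergence of $\sum_n\psi(n)^d$ forces infinitely many solutions; both rest on the same monotonicity comparison.
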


\begin{proof}
  Assume there exists some $\ell\in\NN$ such that $S_\ell (x,y)=\infty$, and define the function $\psi:\NN\to\RR_{\geq 0}$ to be non-increasing and to satisfy
  \begin{equation*}
    \psi(n) =
    \min_{\ell \leq m\leq n}\norm{mx+y} \quad\textrm{for all}\quad n \geq \ell.
  \end{equation*}
  Since $S_\ell(x,y) = \infty$, we have $\psi\in\calD$. But
  $(x,y)\notin W(\psi)$ because $\norm{nx+y} < \psi(n)$ cannot be
  satisfied when $n \geq \ell$, so $(x,y)\notin\Pi$.

  On the other hand, suppose $S_\ell(x,y)<\infty$ for every
  $\ell\in\NN$, and let $\psi\in\calD$.  Now, let $\ell\in\NN$ be
  given. Since $S_\ell (x,y)$ converges while $\sum_n\psi(n)^d$
  diverges, we may choose some $n\geq \ell$ such that
  \begin{equation}\label{eq:psi>min}
    \psi(n) > \min_{\ell\leq m\leq n}\norm{mx+y}.
  \end{equation}
  (There are infinitely many such $n$.) This and the
  fact that $\psi$ is non-increasing imply that there is some
  $m\geq\ell$---specifically, the $m$ realizing the minimum
  in~(\ref{eq:psi>min})---for which
  \begin{equation}\label{eq:ineq}
    \norm{mx+y}<\psi(m).
  \end{equation}
  We have just shown that given any $\ell\in\NN$, there is some
  $m\geq \ell$ satisfying~(\ref{eq:ineq}). This implies that there are
  infinitely many $m\in\NN$ such that~(\ref{eq:ineq})
  holds. Therefore, $(x,y)\in W(\psi)$.  Since $\psi\in\calD$ was
  arbitrary, we have $(x,y)\in\Pi$.
\end{proof}

The following lemma implies that if
$S_\ell(x,y)=\infty$, then $S_{\ell+1}(x,y)=\infty$.

\begin{lemma}\label{lem:ellplusone}
  For any $\ell \in\ZZ$, we have $S_\ell(x,y)\leq \norm{\ell x + y}^d + S_{\ell+1}(x,y)$.
\end{lemma}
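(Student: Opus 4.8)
The plan is to peel off the first term from the sum defining $S_\ell(x,y)$ and to dominate the remaining tail by $S_{\ell+1}(x,y)$ one summand at a time. First I would separate the $n=\ell$ term, noting that the inner minimum over the range $\ell\leq m\leq \ell$ is just $\norm{\ell x+y}^d$:
\begin{equation*}
  S_\ell(x,y) = \norm{\ell x+y}^d + \sum_{n\geq \ell+1}\min_{\ell\leq m\leq n}\norm{mx+y}^d.
\end{equation*}

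The key (and essentially only) observation is monotonicity of the minimum under enlarging its index set: for each $n\geq \ell+1$ the set $\set{\ell,\ell+1,\dots,n}$ contains $\set{\ell+1,\dots,n}$, so taking the minimum over the larger set can only decrease it,
\begin{equation*}
  \min_{\ell\leq m\leq n}\norm{mx+y}^d \;\leq\; \min_{\ell+1\leq m\leq n}\norm{mx+y}^d.
\end{equation*}
Summing this inequality over all $n\geq \ell+1$ gives
\begin{equation*}
  \sum_{n\geq \ell+1}\min_{\ell\leq m\leq n}\norm{mx+y}^d \;\leq\; \sum_{n\geq \ell+1}\min_{\ell+1\leq m\leq n}\norm{mx+y}^d = S_{\ell+1}(x,y),
\end{equation*}
and substituting into the first display yields $S_\ell(x,y)\leq \norm{\ell x+y}^d + S_{\ell+1}(x,y)$, as claimed.

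I do not expect any genuine obstacle here: the argument is a term-by-term comparison together with the bookkeeping of the shift from $\ell$ to $\ell+1$. The only points worth a word of care are that the inequality should be read in $[0,\infty]$, so that it is vacuously true when either side is infinite, and that the consequence flagged just before the lemma---namely $S_\ell(x,y)=\infty \Rightarrow S_{\ell+1}(x,y)=\infty$---is then immediate, since a finite $S_{\ell+1}(x,y)$ would combine with the finite quantity $\norm{\ell x+y}^d$ to force $S_\ell(x,y)<\infty$, a contradiction.
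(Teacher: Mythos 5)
Your proof is correct and is essentially the same as the paper's: split off the $n=\ell$ term and then bound each remaining summand via $\min_{\ell\leq m\leq n}\norm{mx+y}^d \leq \min_{\ell+1\leq m\leq n}\norm{mx+y}^d$. The remarks about working in $[0,\infty]$ and the divergence consequence are fine but not needed beyond what the paper already does.
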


\begin{proof}
  We have
  \begin{multline*}
    S_\ell(x,y) = \sum_{n\geq \ell}\min_{\ell\leq m\leq
        n}\norm{mx+y}^d =
    \norm{\ell x + y}^d + \sum_{n\geq \ell+1}\min_{\ell\leq m\leq n}\norm{mx+y}^d \\
    \leq \norm{\ell x + y}^d + \sum_{n\geq
      \ell+1}\min_{\ell+1\leq m\leq n}\norm{mx+y}^d
    =\norm{\ell x + y}^d + S_{\ell+1}(x,y)
  \end{multline*}
  as needed.
\end{proof}

The following lemma is a simple consequence of bad approximability. 

\begin{lemma}\label{lem:badreturn}
  Suppose $x\in\bad$. Then there exists a constant $c:=c(x)$ such
  that for any $\eps>0$,
  \begin{equation*}
    \min\set*{n \in\NN : \norm{nx}<\eps} \geq \frac{c}{\eps^d}.
  \end{equation*}
\end{lemma}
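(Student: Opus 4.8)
The plan is to simply unwind the definition of $\bad$ and argue by contraposition. By definition, $x\in\bad$ means that $\kappa := \inf_{n\in\NN} n\norm{nx}^d$ is strictly positive; equivalently, $\norm{nx}^d \geq \kappa/n$ for every $n\in\NN$. I will show that $c:=\kappa$ does the job.

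First I would record that the set $\set{n\in\NN : \norm{nx}<\eps}$ is nonempty for every $\eps>0$, so that the minimum on the left-hand side is well-defined: this is immediate from Dirichlet's theorem, which for any integer $Q\geq 1$ produces an $n$ with $1\leq n\leq Q^d$ and $\norm{nx}\leq 1/Q$, so it suffices to take $Q>1/\eps$. (In fact, since $\norm{nx}^d\geq\kappa/n>0$ forces $x$ to be irrational, there are infinitely many such $n$, but nonemptiness is all we need.) Now let $\eps>0$ and suppose $n\in\NN$ satisfies $\norm{nx}<\eps$. Combining this with $\norm{nx}^d\geq\kappa/n$ gives $\kappa/n\leq\norm{nx}^d<\eps^d$, hence $n>\kappa/\eps^d$. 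Since this inequality holds for \emph{every} element of $\set{n\in\NN:\norm{nx}<\eps}$, it holds in particular for the smallest one, so $\min\set{n\in\NN:\norm{nx}<\eps}\geq\kappa/\eps^d=c/\eps^d$, as claimed.

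There is essentially no obstacle here: the lemma is a direct reformulation of the defining inequality for badly approximable vectors, and the only step deserving a word of justification is that the minimum is taken over a nonempty set.
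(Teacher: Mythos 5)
Your proof is correct and is essentially the same argument as the paper's: take $c=\inf_n n\,\norm{nx}^d>0$ and note that $\norm{nx}<\eps$ forces $c\leq n\,\norm{nx}^d<n\eps^d$. The remark about nonemptiness of the set via Dirichlet is a small extra precaution the paper omits, but it does not change the substance.
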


\begin{proof}
  Let
  \begin{equation*}
    c = \inf n\,\norm{nx}^d.
  \end{equation*}
  Since $x\in\bad$, we know that $c>0$. Now, let $\eps>0$, and
  suppose that $\norm{n x}<\eps$. Then
  $c \leq n\,\norm{nx}^d < n\eps^d$, which implies the lemma.
\end{proof}

We are now able to state a proof of Theorem~\ref{thm}. We will
actually prove the following alternative statement.

\begin{theorem}[Theorem~\ref{thm}]\label{thm:restatement}
\begin{equation*}
  x\in\bad \qquad\iff\qquad \fixx(x) = \emptyset. 
  \end{equation*}
\end{theorem}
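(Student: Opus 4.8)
The plan is to prove the two directions separately, using Lemma~\ref{lem:SPis} to translate membership in $\Pi$ (equivalently, nonemptiness of $\fixx(x)$) into the finiteness of all the sums $S_\ell(x,y)$.

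For the forward direction ($x\in\bad \implies \fixx(x)=\emptyset$), I would argue by contradiction: suppose $x\in\bad$ and $y\in\fixx(x)$, so that $(x,y)\in\Pi$ and hence $S_1(x,y)<\infty$ by Lemma~\ref{lem:SPis}. The idea is to bound the summand $\min_{1\le m\le n}\norm{mx+y}^d$ from below along a positive-density set of $n$, contradicting convergence. Here is where Lemma~\ref{lem:badreturn} enters. Fix any $n$ and let $m^*=m^*(n)\in\{1,\dots,n\}$ realize the minimum, with value $\delta_n:=\norm{m^*x+y}$. I want to show $\delta_n$ cannot be too small too often. The key point: if $\norm{m_1 x+y}<\eps$ and $\norm{m_2 x+y}<\eps$ with $m_1\ne m_2$, then $\norm{(m_1-m_2)x}<2\eps$, so by Lemma~\ref{lem:badreturn} applied with $2\eps$ we get $|m_1-m_2|\ge c/(2\eps)^d$. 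Thus the integers $m\in\{1,\dots,n\}$ with $\norm{mx+y}<\eps$ are $c/(2\eps)^d$-separated, so there are at most $O(n\eps^d)$ of them. Taking $\eps=\delta_n$, this means $m^*$ is among at most $O(n\,\delta_n^d)$ values, but also $m^*\le n$; more usefully, turning this around, for the proportion of $n\le N$ with $\delta_n<\eps$ to be controlled, I observe that $\delta_n$ is non-increasing in $n$ (since the minimum is over a growing set), so $\delta_n<\eps$ holds on a terminal segment $n\ge n_0(\eps)$, and on that segment each value of the minimizer is $c/(2\eps)^d$-separated — but since $\delta_n$ is non-increasing while staying below $\eps$ on $[n_0,\infty)$, and the minimizers must keep changing (else $\delta_n$ is eventually constant, impossible if it tends to $0$, and if it tends to a positive limit $S_1$ diverges immediately), one concludes $\delta_n \gg (n)^{-1/d}$ up to the constant $c$, i.e. $n\,\delta_n^d\ge c'$ for all large $n$, whence $S_1(x,y)=\sum_n \delta_n^d \ge \sum_n c'/n=\infty$, a contradiction.

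For the reverse direction ($x\notin\bad \implies \fixx(x)\ne\emptyset$), I need to exhibit some $y$ with $S_\ell(x,y)<\infty$ for all $\ell\in\NN$. Since $x\in\well$, there is a sequence $q_k\to\infty$ with $q_k\norm{q_k x}^d\to 0$, i.e. $\norm{q_k x}$ is much smaller than $q_k^{-1/d}$. The natural strategy is to choose $y$ so that $y$ is extremely well approximated by $-n_k x \bmod \ZZ^d$ along a rapidly growing subsequence of the $q_k$'s: pick $n_1<n_2<\cdots$ recursively from among the $q_k$ so that $\norm{n_{k+1} x} < \norm{n_k x + y}$-type gains compound fast enough — more precisely, build $y$ as a limit so that $\norm{n_k x + y}$ decays superexponentially while $n_k$ grows only as fast as forced. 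Then for any fixed $\ell$, the tail sum $\sum_{n\ge\ell}\min_{\ell\le m\le n}\norm{mx+y}^d$ is dominated by the contributions where the running minimum has been reset by some $n_k\ge\ell$; between consecutive $n_k$ and $n_{k+1}$ the minimum equals $\norm{n_k x+y}^d$ for about $n_{k+1}-n_k$ terms, so the sum is $\lesssim \sum_k (n_{k+1}-n_k)\norm{n_k x+y}^d \lesssim \sum_k n_{k+1}\norm{n_k x+y}^d$, and I choose the $n_k$ (using that infinitely many good $q_k$ are available and the gaps can be taken as small as we like relative to the previous approximation quality) to make this converge. The careful bookkeeping — ensuring simultaneously that $\norm{n_k x + y}$ is small, that $y$ converges, and that $n_{k+1}\norm{n_k x+y}^d$ is summable — is the technical heart here.

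I expect the main obstacle to be the reverse direction, specifically the simultaneous recursive construction of $y$ and the sequence $(n_k)$: one must choose $n_{k+1}$ large enough to find a good homogeneous approximation $\norm{n_{k+1} x}$ refining the current estimate, yet not so large that $n_{k+1}\norm{n_k x+y}^d$ fails to be summable, all while keeping the partial sums $\sum_j \norm{n_j x+y}^d$ Cauchy in the definition of $y$. In the forward direction the only delicate point is making rigorous the monotonicity-plus-separation argument that forces $n\delta_n^d\not\to 0$; I would isolate that as the lemma "if $x\in\bad$ then $\liminf_n n\left(\min_{1\le m\le n}\norm{mx+y}\right)^d>0$ for every $y$," which follows cleanly from Lemma~\ref{lem:badreturn} as sketched, and then Lemma~\ref{lem:SPis} closes the argument.
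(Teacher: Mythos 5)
The forward direction of your proposal rests on a lemma that is false. You reduce everything to the claim that $x\in\bad$ forces $\liminf_n n\,\delta_n^d>0$ for \emph{every} $y$, where $\delta_n=\min_{1\leq m\leq n}\norm{mx+y}$. The separation estimate you correctly extract from Lemma~\ref{lem:badreturn} controls only the \emph{gaps} between successive integers $m$ with $\norm{mx+y}<\eps$; it says nothing about how small the first such $m$ is, and that is exactly what a pointwise bound $\delta_n\gg n^{-1/d}$ would require. Writing $t_1<t_2<\cdots$ for the best inhomogeneous approximation times and $\eps_k=\norm{t_kx+y}$, one has $\liminf_n n\,\delta_n^d=\liminf_k t_k\,\eps_k^d$, and for any irrational $x$ --- badly approximable or not --- there are uncountably many $y$ (built by a nested-interval argument choosing $n_{j+1}$ with $-n_{j+1}x$ landing deep inside the current interval of admissible $y$; in $d=1$ this even holds for a.e.\ $y$ by Kim's theorem) for which $n\norm{nx+y}^d$ gets arbitrarily small infinitely often, so $\liminf_k t_k\eps_k^d=0$. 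Hence the step ``one concludes $n\,\delta_n^d\geq c'$ for all large $n$'' does not follow and is false, so your contradiction never materializes for such $y$. The separation estimate must instead be used blockwise, as in the paper: on the run $t_k\leq n<t_{k+1}$ the minimum is constantly $\eps_k$, and the run has length $t_{k+1}-t_k\geq c/(2^d\eps_k^d)$, so each block contributes at least $c/2^d$ to $S_1(x,y)=\sum_k(t_{k+1}-t_k)\eps_k^d$, which therefore diverges --- no pointwise bound on $\delta_n$ is needed or true. A secondary slip in the same argument: your dichotomy (``eventually constant is impossible if $\delta_n\to0$'') misses the case $\norm{m_0x+y}=0$ for some $m_0$ (which occurs for every $x$, e.g.\ $y\equiv -m_0x$); there $S_1(x,y)$ is genuinely finite and the divergence must be extracted from $S_\ell$ with $\ell>m_0$, using the homogeneous divergence available since $x\in\bad$ --- Lemma~\ref{lem:SPis} lets you do this, but you must say so.

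Your reverse direction is in outline the paper's proof (build $y$ as a limit of $-n_kx$ modulo $\ZZ^d$ along a sparse sequence of good denominators and bound $S_\ell$ block by block), but the ``careful bookkeeping'' you flag as the technical heart is left unresolved, and the tension you worry about (choosing $n_{k+1}$ large while keeping $n_{k+1}\norm{n_kx+y}^d$ summable) is an artifact of your bookkeeping. The paper dissolves it by taking a subsequence with $\sum_k n_k\norm{n_kx}^d<\infty$ and geometrically dominated tails $\sum_{k\geq K}\norm{n_kx}\ll\norm{n_Kx}$, setting $-y=\sum_k(n_kx-a_k)$, and using the \emph{partial sums} $N_K=n_0+\cdots+n_{K-1}$ as approximation times: then $\norm{N_Kx+y}\leq\sum_{k\geq K}\norm{n_kx}\ll\norm{n_Kx}$ while the gap is exactly $N_{K+1}-N_K=n_K$, so $S_\ell\ll\sum_K n_K\norm{n_Kx}^d<\infty$ for every $\ell$ via Lemma~\ref{lem:ellplusone}. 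You should also dispose separately of $x\in\QQ^d$ (e.g.\ $y\equiv-x$ gives infinitely many exact hits), since your construction presumes $\norm{q_kx}>0$.
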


\begin{proof}
  Let $x\in\bad$, and $y\in\RR^d$. Suppose $t\in\NN$ is a ``best
  inhomogeneous approximation'' in that
  \begin{equation*}
    \norm{tx+y} < \norm{t'x + y} \quad\textrm{for all}\quad t'< t.
  \end{equation*}
  (Note that other authors may define ``best inhomogeneous
  approximation'' differently, allowing for $t\in\ZZ$ and comparing
  $\abs{t'}<\abs{t}$. See \ref{rem:pos}.) At the next best
  inhomogeneous approximation $\tilde t$, we must have
  $\tilde t x + y$ lying within $2\norm{tx+y}$ of $t x + y$, in the
  sup-norm of the torus $\RR^d/\ZZ^d$. This means that
  $\norm{(\tilde t - t) x} < 2\norm{tx+y}$. So we will have
  \begin{equation}\label{eq:inhomconvergents}
    \tilde t - t \geq \frac{c}{2^d\norm{tx + y}^d},
  \end{equation}
  by Lemma~\ref{lem:badreturn}.

  Now, recall that
  \begin{equation*}
    S_1(x,y) = \sum_{n\geq 1}\min_{1\leq m\leq n}\norm{mx+y}^d = \sum_k \norm{t_k x + y}^d(t_{k+1}-t_k),
  \end{equation*}
  where $\set{t_k}$ is the sequence of best inhomogeneous
  approximations. By~(\ref{eq:inhomconvergents}), this is bounded
  below by $\sum_k c/2^d$, which diverges. By
  Lemma~\ref{lem:SPis}, $y\notin\fixx(x)$, and we are done because
  $y$ was arbitrary.

  Now let $x\in\well$. If $x\in\QQ^d$, then it is easy to see that
  $\fixx(x)\neq\emptyset$, so assume otherwise. Since $x\in\well$,
  there is a sequence $\set{n_k}_{k\geq 0}$ of natural numbers with
  \begin{equation}
\sum_k n_k\,\norm{n_k x}^d < \infty \label{eq:conv}
\end{equation}
and
\begin{equation}
  \label{eq:geometric}
  \sum_{k\geq K} \norm{n_k x} \ll \norm{n_K x} \quad (\forall K\in\NN).
\end{equation}
Define 
\begin{equation*}
  -y = \sum_{k\geq 0} (n_k x - a_k),
\end{equation*}
where $a_k\in\ZZ^d$ is such that
$\norm*{n_k x - a_k}_\infty = \norm{n_k x}$, and let
\begin{equation*}
  N_K = \sum_{k=0}^{K-1} n_k.
\end{equation*}
Notice then that 
\begin{equation}\label{eq:Nk}
  \norm{N_K x + y} = \norm*{\sum_{k\geq K}(a_k - n_kx)}\leq \sum_{k\geq K}\norm{n_kx}.
\end{equation}
Now, let $\ell\in\NN$, and let $K_\ell\in\NN$ be such that
$N_{K_\ell} \geq \ell$. We will show that
$S_{N_{K_\ell}}(x,y)<\infty$, and this will imply, by
Lemma~\ref{lem:ellplusone}, that $S_\ell (x,y)<\infty$. Note that
\begin{multline*}
  S_{N_{K_\ell}}(x,y) \leq \sum_{K\geq K_\ell} \norm{N_K x + y}^d(N_{K+1} - N_K) \\
\overset{(\ref{eq:Nk})}{\leq} \sum_{K\geq K_\ell}\parens*{\sum_{k\geq K}\norm{n_kx}}^d n_K \overset{(\ref{eq:geometric})}{\ll} \sum_{K\geq K_\ell} n_K\,\norm{n_K x}^d,
\end{multline*}
which converges, by~(\ref{eq:conv}). So, by
Lemma~\ref{lem:ellplusone}, $S_\ell(x,y)<\infty$. Since $\ell$ was
arbitrary, Lemma~\ref{lem:SPis} tells us that $y\in\fixx(x)$, hence
$\fixx(x)\neq\emptyset$.
\end{proof}

From this proof, it is clear that if $x\in\well\setminus\QQ^d$, then
$\fixx(x)$ is uncountable. What if $x\in\QQ^d$?
Theorem~\ref{thm:fixxcardinality} in the Appendix tells us that in the
case $d=1$, we have that $\fixx(x)$ is countable whenever
$x\in\QQ$. In fact in this case we have that
\begin{equation*}\tag{Prop.~\ref{prop:fixrationals}}
  \fixx(x) = \set*{y\in\QQ : (\ZZ x + y)\cap \ZZ \neq \emptyset}.
\end{equation*}
The proof uses continued fractions, which are not available in higher
dimensions. Still, we expect an analogous statement to be true in
general.

Many questions remain. Here is another: Is $\fixy(y)$ always non-empty? 

\section{Proof of Theorem~\ref{thm2}}
\label{sec:thm2}

\begin{lemma}\label{lem:restrictint}
  We may write
  \begin{equation*}
    \Pi = \bigcap_{\psi\in\recips} W(\psi)
  \end{equation*}
  where $\recips \subset \calD$ consists of the functions in $\calD$
  that only take values that are reciprocals of natural
  numbers. 
\end{lemma}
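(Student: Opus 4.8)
The plan is to establish the two inclusions separately. Since $\recips \subset \calD$, the intersection over $\recips$ is automatically a superset of $\Pi$, so the nontrivial direction is $\bigcap_{\psi\in\recips} W(\psi) \subseteq \Pi$; equivalently, for every $\psi\in\calD$ we must find some $\psi'\in\recips$ with $W(\psi') \subseteq W(\psi)$. First I would take an arbitrary $\psi\in\calD$ and define $\psi'(n)$ to be the largest reciprocal of a natural number that is $\leq \psi(n)$; that is, $\psi'(n) = 1/\ceil{1/\psi(n)}$ when $\psi(n)>0$, and $\psi'(n)=0$ when $\psi(n)=0$. (One should note that $\psi$ non-increasing forces $1/\psi$ non-decreasing, hence $\ceil{1/\psi}$ non-decreasing, hence $\psi'$ non-increasing, so $\psi'$ has the right monotonicity.) By construction $\psi'(n) \leq \psi(n)$ for all $n$, so $W(\psi')\subseteq W(\psi)$ trivially.

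The remaining point is that $\psi'\in\calD$, i.e. that $\sum_n \psi'(n)^d$ still diverges. This is the step I expect to require a little care. The issue is that $\psi'$ could be much smaller than $\psi$ when $\psi(n)$ is just above a reciprocal — in the worst case $\psi(n)$ slightly exceeds $1/(k+1)$ while $\psi'(n) = 1/(k+1)$, a loss by a factor close to $(k+1)/k$. But since $\psi(n)\leq 1$ may fail only finitely often (as $\psi$ is non-increasing with divergent $d$-th-power sum, it must tend to $0$, so eventually $\psi(n)\le 1$ and the relevant $k = \ceil{1/\psi(n)}\ge 1$), we have for all large $n$ that $\psi'(n) = 1/k \geq \psi(n)/2$, because $\psi(n) > 1/(k+1) \geq 1/(2k)$ when $k\ge 1$. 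Hence $\sum_n \psi'(n)^d \geq 2^{-d}\sum_{n \text{ large}} \psi(n)^d = \infty$, so $\psi'\in\calD$. Combined with $\psi'\in\recips$ by construction, this shows $\bigcap_{\psi\in\recips}W(\psi)\subseteq W(\psi)$ for each $\psi\in\calD$, hence $\subseteq\Pi$, completing the proof.

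One small subtlety to flag in the write-up: if $\psi(n)=0$ for all $n$ then $\psi\notin\calD$ anyway, so we never face the degenerate case where $\psi'$ is identically $0$; and on the finitely many indices where $\psi(n)>1$ we can simply set $\psi'(n)=1$, which keeps $\psi'$ non-increasing and $\leq\psi$ there, and affects neither the divergence of the sum nor membership in $\recips$. The main obstacle, such as it is, is purely bookkeeping: making sure the rounded-down function is genuinely non-increasing and that the rounding loses at most a bounded multiplicative factor on a cofinite set of indices.
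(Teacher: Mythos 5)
Your construction is the same as the paper's: round each $\psi(n)$ down to the largest reciprocal $1/k_n\leq\psi(n)$, note $\psi'\leq\psi$ gives $W(\psi')\subseteq W(\psi)$, and check $\psi'\in\recips$; the easy inclusion $\Pi\subseteq\bigcap_{\psi\in\recips}W(\psi)$ is as you say. The step that is wrong as written is your justification of divergence: a non-increasing $\psi$ with $\sum_n\psi(n)^d=\infty$ need \emph{not} tend to $0$ (e.g.\ $\psi\equiv 2$ lies in $\calD$), so neither ``$\psi\to 0$'' nor ``$\psi(n)\leq 1$ fails only finitely often'' is available. The correct dichotomy --- exactly the case split the paper makes --- is: either $\psi(n)>1$ for every $n$ (by monotonicity this is the only way ``eventually $\leq 1$'' can fail), in which case $\psi'\equiv 1$ and divergence is immediate (the paper's ``$k_n$ bounded'' case), or $\psi(n)\leq 1$ from some point on, where your factor-of-two comparison takes over.

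A second, smaller repair: the inequality chain you give for the factor of two is muddled. With $k_n=\ceil{1/\psi(n)}$ one has $1/k_n\leq\psi(n)$ and, for $k_n\geq 2$, $\psi(n)<1/(k_n-1)\leq 2/k_n$; it is this \emph{upper} bound on $\psi(n)$, not the lower bound $\psi(n)>1/(k_n+1)$ that you cite, which yields $\psi'(n)\geq\psi(n)/2$. (Your ``worst case'' description likewise has the roles reversed: the loss factor approaches $(k+1)/k$ when $\psi(n)$ sits just below $1/k$, not just above $1/(k+1)$.) With these two points fixed, your argument is complete and coincides with the paper's, except that you use a uniform factor-$2^d$ comparison where the paper compares $\sum_n 1/(k_n-1)^d$ with $\sum_n 1/k_n^d$ in its unbounded case; both are fine.
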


\begin{proof} 
  To any $\psi\in \calD$ we associate the sequence
  $\bk :=\set{k_n}\in\NN^\NN$ defined by
  \begin{equation*}
    k_n-1 < \frac{1}{\psi(n)} \leq k_n,
  \end{equation*}
  and we define $\psi_\bk$ by
  \begin{equation*}
    \psi_\bk(n) = \frac{1}{k_n}.
  \end{equation*}
  That is, for each $n$ we replace $\psi(n)$ with the largest rational
  number $1/k$ ($k\in\NN$) such that $1/k \leq \psi(n)$. Clearly, we
  have $\psi_\bk(n) \leq \psi(n)$ for every $n\in\NN$, and therefore
  $W(\psi_\bk)\subseteq W(\psi)$.

  We only have to show that $\psi_\bk\in\calD$. But it is obvious that
  $\psi_\bk$ is non-increasing, so what we must show is that
  $\sum_n \psi_\bk(n)^d$ diverges. In the first case, if $k_n$ does not
  diverge to infinity, it is therefore bounded, and so $\psi_\bk$ is
  bounded below by some positive number, hence the series diverges. In
  the second case, if $k_n\uparrow\infty$, then we have $k_n>1$ for
  every sufficiently large $n$. For these $n$, we have
  $1/k_n \leq \psi(n) < 1/(k_n-1)$.  Divergence of $\sum_n\psi(n)^d$
  implies the divergence of $\sum_n1/(k_n-1)^d$, which implies
  divergence of $\sum_n 1/k_n^d = \sum_n\psi_\bk(n)^d$, so
  $\psi_\bk\in\calD$ as claimed. This proves the lemma.
\end{proof}

In view of Lemma~\ref{lem:restrictint},
we write
\begin{equation*}
  \Pi = \bigcap_{\bk\in \DD}W(\bk)
\end{equation*}
where $\DD\subset\NN^\NN$ denotes the non-decreasing sequences whose
reciprocals to $d$th powers form a divergent series, and
$W(\bk):=W(\psi_\bk)$.

In our proof that $\Pi, \fixy(y), \fixx(x)$ are measurable, we will
use some terminology from descriptive set theory. Suppose $X$ is the
set $[0,1]^d$ or $[0,1]^{2d}$. A subset of $X$ is called \emph{analytic} if
it is the projection of a Borel set in $\NN^\NN\times X$. A subset of
$X$ is called \emph{coanalytic} if it is the complement of an analytic
set. We will use the following basic facts: 
\begin{itemize}
\item Coanalytic sets are measurable. 
\item The projection of an analytic set in $\NN^\NN\times X$ is an
  analytic set in $X$.
\item Borel sets are exactly those which are both analytic and
  coanalytic.
\end{itemize}
These and more are found in~\cite{Kechrisbook,Moschovakisbook}.

\begin{lemma}\label{lem:measurable}
  The sets $\Pi, \fixy(y), \fixx(x)$ are coanalytic, hence measurable. 
\end{lemma}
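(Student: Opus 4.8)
The plan is to stay within the descriptive-set-theory framework just set up and exhibit $\Pi$ as the complement of an analytic set. The starting point is the representation $\Pi = \bigcap_{\bk\in\DD}W(\bk)$ recorded above, which --- thanks to Lemma~\ref{lem:restrictint} --- replaces the unwieldy index set $\calD$ by the Polish space $\NN^\NN$. Passing to complements, $\Pi\comp = \bigcup_{\bk\in\DD}W(\bk)\comp$, and the point is that this equals the coordinate projection $\pi_X(R)$ of
\[
  R \overset{\textrm{def}}{=} \set*{(\bk,x,y)\in\NN^\NN\times X : \bk\in\DD \ \text{ and }\ (x,y)\in W(\bk)\comp},
\]
where $X=[0,1]^{2d}$ is the relevant fundamental domain (working in the cube is harmless because $W(\psi)$, hence $\Pi$, is invariant under the integer lattices). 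Once we know $R$ is Borel, it follows by definition that $\Pi\comp$ is analytic, hence $\Pi$ is coanalytic, hence measurable.

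So the task reduces to two routine verifications of Borelness. First, $\DD$ is Borel --- in fact $G_\delta$ --- in $\NN^\NN$: the non-decreasing sequences form a closed set (an intersection of clopen conditions $k_n\leq k_{n+1}$), and divergence of $\sum_n k_n^{-d}$ can be written as $\bigcap_{M}\bigcup_{N}\set*{\bk : \sum_{n\leq N}k_n^{-d}>M}$, in which each set in the union depends on only finitely many coordinates and is therefore clopen. Second, the relation ``$(x,y)\in W(\bk)\comp$'' is Borel --- in fact $F_\sigma$ --- in $(\bk,x,y)$: it says that only finitely many $n$ satisfy $\norm{nx+y}<k_n^{-1}$, that is,
\[
  (\exists \ell\in\NN)(\forall n\geq \ell)\ \norm{nx+y}\geq \tfrac{1}{k_n},
\]
and for each fixed $n$ the set $\set*{(\bk,x,y):\norm{nx+y}\geq k_n^{-1}}$ is closed, since $(\bk,x,y)\mapsto\norm{nx+y}$ and $(\bk,x,y)\mapsto k_n^{-1}$ are both continuous (the latter because $\bk\mapsto k_n$ is continuous, $\NN$ being discrete). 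Intersecting the two conditions shows $R$ is Borel, which settles the claim for $\Pi$.

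The fibers $\fixx(x)$ and $\fixy(y)$ are obtained by slicing. For fixed $x$, the section $R_x=\set*{(\bk,y):(\bk,x,y)\in R}$ is a section of the Borel set $R$, hence Borel in $\NN^\NN\times[0,1]^d$, and $\fixx(x)\comp$ is its projection to the $y$-coordinate, hence analytic; so $\fixx(x)$ is coanalytic. The argument for $\fixy(y)$ is identical with the roles of the two $\RR^d$-coordinates exchanged. Since coanalytic sets are measurable (one of the facts recalled above), all three sets are measurable, and periodicity transfers this from the cubes to $\RR^{2d}$ and $\RR^d$.

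I do not anticipate a genuine obstacle here; the only care needed is the bookkeeping of Borelness inside $\NN^\NN\times X$ --- in particular, getting the product topology on $\NN^\NN$ right so that ``$\bk$ non-decreasing,'' ``$\sum_n k_n^{-d}$ divergent,'' and the dependence on each individual $k_n$ all come out Borel as claimed. As an aside, one can bypass descriptive set theory entirely and even see that $\Pi$ is Borel: by Lemma~\ref{lem:SPis}, $\Pi=\bigcap_{\ell\in\NN}\set*{(x,y):S_\ell(x,y)<\infty}$, and each $S_\ell$ is a nondecreasing pointwise limit of continuous partial sums, hence lower semicontinuous, so each of these sets is $F_\sigma$; freezing one coordinate does the same for $\fixx(x)$ and $\fixy(y)$. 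The projection argument is nevertheless the one to present, since it carries over verbatim to the weighted and higher-exponent variants in \S\ref{sec:weights} and \S\ref{sec:exponents}.
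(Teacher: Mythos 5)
Your proposal is correct and takes essentially the same route as the paper: both exhibit $\Pi\comp$ as the projection of the Borel set $\set{(\bk,x,y):\bk\in\DD,\ (x,y)\notin W(\bk)}\subset\NN^\NN\times[0,1]^{2d}$ after invoking Lemma~\ref{lem:restrictint}, and handle $\fixx(x)$, $\fixy(y)$ by the same argument on sections. Your aside that Lemma~\ref{lem:SPis} makes $\Pi$ outright Borel (each $\set{S_\ell<\infty}$ being $F_\sigma$ by lower semicontinuity of $S_\ell$) is also valid and in fact gives a sharper conclusion than coanalyticity, though the paper does not take that shortcut.
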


\begin{proof}
  We will prove that $\Pi\comp$ is analytic. Notice that $\Pi\comp$ is
  the projection from $\NN^\NN\times [0,1]^{2d}$ to $[0,1]^{2d}$ of
  the set
  \begin{equation*}
    \WW\comp = \set*{(\bk, x,y) : \bk\in\DD, (x,y) \notin W(\bk)}.
  \end{equation*}
  Since analytic sets are closed under projections, it is enough to
  show that $\WW\comp$ is analytic. In fact, we will see that it is
  Borel.

  Notice that
\begin{equation*}
  \WW\comp = \parens*{\DD\times [0,1]^{2d}} \cap \set*{(\bk, x, y) : (x,y)\notin W(\bk)}. 
\end{equation*}
The set $\DD\times[0,1]^{2d}$ is Borel. And the set
$\set*{(\bk, x, y) : (x,y)\notin W (\bk)}$ can be expressed as
\begin{multline*}
 \bigcup_{m=1}^\infty \bigcap_{n=m}^\infty \set*{(\bk, x, y): \norm{nx+y} \geq \frac{1}{\bk(n)}}\\
= \bigcup_{m=1}^\infty \bigcap_{n=m}^\infty \bigcup_{\ell = 1}^\infty \set*{(\bk, x, y): \bk(n) = \ell\textrm{ and } \norm{nx+y} \geq \frac{1}{\ell}}.
\end{multline*}
The set of all $(\bk,x,y)\in\NN^\NN\times[0,1]^{2d}$ such that
$\bk(n)= \ell$ is Borel, and so is the set of all
$(\bk, x,y)\in\NN^\NN\times [0,1]^{2d}$ such that
$\norm{nx + y}\geq 1/\ell$. Since Borel sets are closed under
countable unions and intersections, we are done.

The same argument works for $\fixy(y)$ and $\fixx(x)$. 
\end{proof}

Now that we know the sets of interest to us are measurable, we can
proceed with calculating their measures. 

\begin{theorem}[Theorem~\ref{thm2}]\label{lem:Pix01}
  For every $x\in\RR^d$, the set $\fixx(x)$ has Lebesgue measure $0$.
\end{theorem}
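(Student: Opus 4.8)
The plan is to pass to the torus $\TT^d=\RR^d/\ZZ^d$: since $\norm{\cdot}$ is $\ZZ^d$-periodic, $S_\ell(x,y)$ and hence $\fixx(x)$ are $\ZZ^d$-periodic in $y$, so it suffices to show that the image of $\fixx(x)$ in $\TT^d$ is Haar-null. By Lemma~\ref{lem:SPis} we have $\fixx(x)\subseteq\set*{y : S_1(x,y)<\infty}$, and I will argue with the latter set. Let $\Lambda\subseteq\TT^d$ be the closure of the orbit $\set*{nx\bmod\ZZ^d : n\in\NN}$; as this orbit is a sub-semigroup of the compact group $\TT^d$, the set $\Lambda$ is a closed subgroup, so $\Lambda=-\Lambda$.

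First I would dispose of the case $\Lambda\neq\TT^d$, which in particular covers every $x\in\QQ^d$. If $y\in\fixx(x)$ then $S_1(x,y)<\infty$ forces $\min_{1\le m\le n}\norm{mx+y}^d\to0$, that is, $\inf_{m\ge1}\norm{mx+y}=0$; this says $-y\in\Lambda$, hence $y\in\Lambda$. Thus the image of $\fixx(x)$ in $\TT^d$ lies in $\Lambda$, and a proper closed subgroup of $\TT^d$ is Haar-null, so we are done in this case.

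The remaining, and main, case is $\Lambda=\TT^d$, i.e.\ $R_x\colon y\mapsto y+x$ is a minimal rotation of $\TT^d$, hence ergodic for Haar measure. Here the plan has two ingredients. (i) The set $\fixx(x)$ is $R_x$-invariant: from $S_\ell(x,y+x)=S_{\ell+1}(x,y)$ and Lemma~\ref{lem:ellplusone} (which with $\ell=1$ gives $S_1(x,y)<\infty$ whenever $S_2(x,y)<\infty$) one obtains $(x,y+x)\in\Pi\iff(x,y)\in\Pi$. Since $\fixx(x)$ is measurable by Lemma~\ref{lem:measurable}, ergodicity of $R_x$ forces its Haar measure to be $0$ or $1$. (ii) It then remains to exclude measure $1$, and for that I would prove the unconditional estimate $\Leb\set*{y\in\TT^d : S_1(x,y)\le T}\le\tfrac12$ for every $T>0$ and every $x$. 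Assume some such set $E$ has $\Leb(E)>\tfrac12$. Write $S_1(x,y)=\sum_{n\ge1}g_n(y)$ with $g_n(y)=\min_{1\le m\le n}\norm{mx+y}^d=\operatorname{dist}(-y,A_n)^d$, where $A_n=\set*{mx\bmod\ZZ^d : 1\le m\le n}$ has at most $n$ points. By Tonelli, $T\ge\int_E S_1(x,y)\,dy=\sum_{n\ge1}\int_{-E}\operatorname{dist}(z,A_n)^d\,dz$ (substituting $z=-y$ and using invariance of Haar measure under $z\mapsto-z$). The $r$-neighborhood of $A_n$ has measure at most $n(2r)^d$; choosing $r_n$ with $n(2r_n)^d=\tfrac14$ leaves a subset of $-E$ of measure at least $\tfrac14$ on which $\operatorname{dist}(z,A_n)\ge r_n$, so $\int_{-E}\operatorname{dist}(z,A_n)^d\,dz\ge\tfrac14 r_n^d\asymp n^{-1}$. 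Summing over $n$ contradicts the finiteness of $T$. Hence $\Leb\set*{y : S_1(x,y)<\infty}\le\tfrac12<1$, so $\fixx(x)$ is not co-null, and by (i) it is null. This proves the theorem, and the same argument gives $\Leb(\Pi)=0$ by Fubini.

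The step I expect to be the crux is the estimate (ii): it is the quantitative form of the statement that an equidistributed orbit of $n$ points is within $\asymp n^{-1/d}$ of a typical point, so that $g_n$ has average $\asymp n^{-1}$ on any fixed set of positive measure and $\sum_n g_n$ diverges on a large set. The one-line pigeonhole bound on $r$-neighborhoods of $A_n$, fed into Tonelli, is exactly what keeps (ii) free of any hypothesis on $x$; the ergodic zero--one dichotomy then upgrades ``$\fixx(x)$ is not co-null'' to ``$\fixx(x)$ is null''. Minor points to watch are the reduction to $\TT^d$ and the use of Lemma~\ref{lem:ellplusone} in the correct direction in the invariance check (i).
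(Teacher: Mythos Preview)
Your proof is correct, and the overall architecture---reduce to $\TT^d$, split into the non-dense and dense orbit cases, and in the dense case use ergodicity of the rotation to get a $0$--$1$ law---matches the paper exactly. The difference lies in step~(ii), where you exclude the full-measure alternative.

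The paper does this by invoking external results: if $x\notin\bad$, Kurzweil's Theorem supplies a $\psi\in\calD$ with $\Leb\bigl(W_x(\psi)\cap[0,1]^d\bigr)<1$, whence $\Leb\bigl(\fixx(x)\cap[0,1]^d\bigr)<1$; if $x\in\bad$, Theorem~\ref{thm:restatement} gives $\fixx(x)=\emptyset$ outright. Your route is genuinely different and more self-contained: the Tonelli/pigeonhole estimate $\Leb\set*{y:S_1(x,y)\le T}\le\tfrac12$ is elementary, uniform in $x$, and makes no appeal to Kurzweil's Theorem or to Theorem~\ref{thm}. In effect you show directly that $\int_{\TT^d} g_n(y)\,dy\asymp n^{-1}$ on any set of measure exceeding $\tfrac12$, so $\sum_n g_n$ cannot stay bounded there. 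This buys independence from the paper's main theorem (avoiding any circularity concern) and from the classical Kurzweil result; the paper's version, by contrast, illustrates how Theorem~\ref{thm2} sits downstream of Kurzweil's characterization. Both arguments are short; yours is arguably the cleaner one to present first, since it needs only Lemmas~\ref{lem:SPis}, \ref{lem:ellplusone}, and~\ref{lem:measurable}.
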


\begin{proof}
  Let $x\in\RR^d$. Either the set $\set{nx\,(\bmod\,1)}_{n\geq 0}$ is
  dense in $[0,1]^d$, or its closure is a proper subtorus
  $T\subset[0,1]^d$. In the latter case, we see that if $y\notin T$,
  then $\norm{nx+y}$ is uniformly bounded below, and therefore, if
  $\psi\in\calD$ has the property that $\psi(n)\to 0$ as $n\to\infty$,
  then $(x,y)\notin W(\psi)$. Hence $(x,y)\notin\Pi$. This shows that
  $\fixx(x) \subset T$, so has Lebesgue measure $0$.

  Now suppose that $\set{nx\,(\bmod\,1)}_{n\geq 0}$ is dense in
  $[0,1]^d$. In this case, the translation $y\mapsto y+x\,(\bmod\,1)$
  is an ergodic transformation of $[0,1]^d$. Notice that if $\norm{(n+1)x +y} < \psi(n+1)$, then we have 
  \begin{equation*}
    \norm{nx + (y+x)} < \psi(n+1) \leq \psi(n).
  \end{equation*}
  This shows that $\fixx(x)+x \subseteq \fixx(x)$. That is,
  $\fixx(x)\cap[0,1]^d$ is an invariant measurable (by
  Lemma~\ref{lem:measurable}) set for the toral translation by
  $x$. Therefore, $\fixx(x)\cap[0,1]^d$ has measure $0$ or $1$.

  Now, recall that Kurzweil's Theorem asserts that 
\begin{equation*}
  \bigcap_{\psi\in\calD} \set*{x : W_x(\psi) \textrm{ has full measure}} = \bad,
\end{equation*}
where
\begin{equation*}
  W_x(\psi) \overset{\textrm{def}}{=} \set*{y \in \RR^d : (\exists^\infty n\in\NN)\, \norm{nx+y}<\psi(n)}.
\end{equation*}
This implies that for any $x\notin\bad$, there is some $\psi\in\calD$
such that 
\begin{equation*}
\Leb\parens*{W_x(\psi)\cap[0,1]^d}<1,
\end{equation*}
which in turn implies that
\begin{equation*}
\Leb\parens*{\fixx(x)\cap[0,1]^d}<1
\end{equation*}
because
\begin{equation*}
  \fixx(x) = \bigcap_{\psi\in\calD} W_x(\psi).
\end{equation*}
The previous paragraph now implies that $\fixx(x)\cap[0,1]^d$ has
measure $0$, hence $\fixx(x)$ does.

On the other hand, if $x\in\bad$, then Theorem~\ref{thm:restatement} tells us
that $\fixx(x)=\emptyset$, so it has measure $0$.
\end{proof}

It follows that $\Pi$ has measure $0$, too, and that for almost every
$y\in\RR^d$, the set $\fixy(y)$ has measure $0$. We ask: Can
  $\fixy(y)$ ever have a measure other than $0$? We show below that
the only other possible measure for $\fixy(y)\cap[0,1]^d$ is $1$.

The following lemma states that $\Pi$ is invariant under integer
contractions in the ``homogeneous'' direction, and integer dilations in
the ``inhomogeneous'' direction. We will use it to prove a $0$-$1$ law
for the measure of $\fixy(y)$.

\begin{lemma}\label{lem:diagtimesPi}
  We have
  \begin{equation*}
    \begin{pmatrix}
      (1/v)\cdot \bone_d & 0 \\ 0 & u\cdot\bone_d
    \end{pmatrix}\Pi\subseteq\Pi
  \end{equation*}
  for any $s\in(0,1]$ and $u,v\in\NN$.
\end{lemma}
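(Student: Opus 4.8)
The plan is to work with the characterization $\Pi = \bigcap_{\psi\in\calD}W(\psi)$ and to exploit that the displayed matrix acts by $(x,y)\mapsto\bigl((1/v)x,\,uy\bigr)$, together with the clean identity
\[
  (uvm)\cdot(x/v) + uy \;=\; u\,(mx+y)\qquad(m\in\NN),
\]
which links values of the contracted-and-dilated pair along the arithmetic progression $n=uvm$ to values of the original pair. (Only the integers $u$ and $v$ enter the argument.)

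First I would fix $(x,y)\in\Pi$ and $u,v\in\NN$, set $x'=x/v$, $y'=uy$, $w=uv$, and take an arbitrary $\psi\in\calD$, with the goal of showing $(x',y')\in W(\psi)$. Since $u$ is a positive integer and $\norm{\cdot}$ is sup-norm distance to $\ZZ^d$, one has $\norm{u\xi}\le u\norm{\xi}$ for every $\xi\in\RR^d$; applying this with $\xi=mx+y$ and using the identity above gives $\norm{wm\cdot x'+y'}\le u\,\norm{mx+y}$ for all $m\in\NN$. Hence it suffices to produce infinitely many $m\in\NN$ with $\norm{mx+y}<\tilde\psi(m)$, where $\tilde\psi(m):=\psi(wm)/u$: each such $m$ yields an $n=wm$ with $\norm{nx'+y'}<\psi(n)$, and there are infinitely many of them, so $(x',y')\in W(\psi)$; as $\psi\in\calD$ was arbitrary, this gives $(x',y')\in\Pi$.

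So everything reduces to checking that $\tilde\psi\in\calD$, and this is the one step that takes any work (the main obstacle). That $\tilde\psi$ is non-increasing is immediate from monotonicity of $\psi$. For the divergence of $\sum_m\tilde\psi(m)^d = u^{-d}\sum_m\psi(wm)^d$, I would partition $\NN$ into the consecutive length-$w$ blocks $B_m=\{w(m-1)+1,\dots,wm\}$ and use that $\psi$ is non-increasing to estimate $\sum_{j\in B_m}\psi(j)^d\le w\,\psi(w(m-1))^d$ for $m\ge 2$; summing over $m$ gives $\sum_{j\ge 1}\psi(j)^d\le w\,\psi(1)^d + w\sum_{m\ge 1}\psi(wm)^d$, and since the left-hand side diverges while $w\,\psi(1)^d<\infty$, the series $\sum_m\psi(wm)^d$ diverges. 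With $\tilde\psi\in\calD$ in hand, the membership $(x,y)\in\Pi\subseteq W(\tilde\psi)$ supplies the required infinitely many $m$, completing the proof. (One could instead try to argue through Lemma~\ref{lem:SPis} by bounding $S_\ell(x',y')$, but that forces one to control all residues modulo $w$ simultaneously, whereas the route above needs only the single progression $n\equiv 0\pmod w$; so I would proceed as described.)
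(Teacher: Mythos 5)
Your proof is correct and is essentially the paper's argument: the paper proves the same inclusion fiber-wise, first showing $u\cdot\fixx(x)\subseteq\fixx(x)$ via $\widetilde\psi(n)=\tfrac1u\psi(un)$ and then $(1/v)\cdot\fixy(y)\subseteq\fixy(y)$ via $\widetilde\psi(n)=\psi(vn)$, using exactly your two ingredients, namely $\norm{u\xi}\leq u\norm{\xi}$ and the fact that a non-increasing $\psi$ with $\sum_n\psi(n)^d=\infty$ still gives a divergent series along an arithmetic progression. You merely merge the two steps into one with $\tilde\psi(m)=\psi(uvm)/u$ and spell out, via the block estimate, the divergence claim that the paper only asserts; this is a harmless variation, not a different route.
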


\begin{proof}
  We will prove this fiber-wise. That is, we show that
  $u\cdot\fixx(x) \subseteq \fixx(x)$ and
  $(1/v)\cdot\fixy(y) \subset\fixy(y)$.

  Suppose $y\in\fixx(x)$ and let $\psi\in\calD$ and
  $u\in\NN$. Define $\widetilde\psi(n) = \frac{1}{u}\psi(un)$. Notice
  that $\widetilde \psi$ is non-increasing, and since $\set{un}_{n\geq 1}$ is
  an arithmetic sequence and $\psi$ is non-increasing, we therefore
  have that $\sum \widetilde\psi(n)^d$ diverges. Therefore
  $\widetilde\psi\in\calD$, and there are infinitely many solutions
  $n\in\NN$ to $\norm{nx+y}<\widetilde\psi(n)$. But
  $\norm{nx+uy} \leq u\, \norm{(n/u)x+y}$, and therefore there are
  infinitely many $n\in\NN$ (all of them being multiples of $u$) satisfying
  $\norm{nx+uy} < u\,\widetilde\psi(n/u) = \psi(n)$. This shows that
  $uy\in\fixx(x)$ and therefore
  \begin{equation*}
    u\cdot\fixx(x)\subseteq \fixx(x)
  \end{equation*}
  for all $u\in\NN$.

  Suppose $x\in\fixy(y)$ and let $\psi\in\calD$ and $v\in\NN$. Define
  $\widetilde\psi(n) = \psi(vn)$. It is clear that
  $\widetilde\psi\in\calD$, therefore there are infinitely many
  solutions $n\in\NN$ to $\norm{nx+y}<\widetilde\psi(n)$. Then there
  are infinitely many solutions $n\in\NN$ to
  $\norm{n(x/v)+y} < \widetilde\psi(n/v) = \psi(n)$. Since
  $\psi\in\calD$ was arbitrary, $x/v\in\fixy(y)$. We have just shown
  that
  \begin{equation*}
    \frac{1}{v}\cdot\fixy(y) \subseteq \fixy(y)
  \end{equation*}
  for all $s\in\NN$.
\end{proof}

Now a standard argument using the Lebesgue Density Theorem will prove
the desired $0$-$1$ law.

\begin{lemma}\label{lem:Piy01}
  For any $y\in\RR^d$, the set $\fixy(y)\cap[0,1]^d$ has measure $0$ or $1$.
\end{lemma}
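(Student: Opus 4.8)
The plan is to run the classical Lebesgue Density Theorem argument, using the scaling invariance from Lemma~\ref{lem:diagtimesPi} to generate self-similarity at arbitrarily small scales. Write $A = \fixy(y)\cap[0,1]^d$, and suppose for contradiction that $\Leb(A) = \alpha$ with $0 < \alpha < 1$. By Lemma~\ref{lem:measurable} the set $A$ is measurable, so the Lebesgue Density Theorem applies: almost every point of $A$ is a density point, and (since $\alpha < 1$) almost every point of the complement $[0,1]^d\setminus A$ is a density point of the complement. The goal is to derive a contradiction by showing that $A$ looks, at small scales around a density point, simultaneously ``almost full'' and ``not almost full.''

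First I would record the consequence of Lemma~\ref{lem:diagtimesPi} in the form I need: for every $v\in\NN$, we have $\tfrac1v\cdot\fixy(y)\subseteq\fixy(y)$, hence $\tfrac1v\cdot A \subseteq \fixy(y)\cap[0,\tfrac1v]^d$. Since $\fixy(y)$ is unaffected by which fundamental domain we use (the set $W(\psi)$ and hence $\fixy(y)$ need not be $\ZZ^d$-periodic in $x$—so I should be a little careful here). Let me instead argue directly on $[0,1]^d$: by periodicity considerations I would first check that $\fixy(y)$ \emph{is} invariant under $x\mapsto x+e_j$ for standard basis vectors $e_j$, since replacing $x$ by $x+e_j$ changes $nx+y$ by the integer vector $ne_j$, leaving $\norm{nx+y}$ unchanged. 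So $\fixy(y)$ is $\ZZ^d$-periodic in $x$, and $A$ is a genuine subset of the torus with the scaling property: for each $v$, the homothety of ratio $1/v$ centered at the origin maps $A$ into $\fixy(y)$, which modulo $\ZZ^d$ is again $A$. Tiling $[0,1]^d$ by the $v^d$ translates of $[0,\tfrac1v]^d$ and using periodicity, each small cube $Q$ of side $1/v$ satisfies $\Leb(A\cap Q)\geq \tfrac1{v^d}\Leb(A) = \tfrac{\alpha}{v^d}$, i.e., the \emph{relative} density of $A$ in every such dyadic-type cube is at least $\alpha$.

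Now take a density point $z$ of the complement $[0,1]^d\setminus A$; such a point exists since $\Leb(A) = \alpha < 1$. For $z$ to be a density point of the complement we need: for every $\eps>0$ there is a small ball (equivalently, a small cube) around $z$ in which $A$ has relative measure less than $\eps$. But shrinking cubes around $z$ can be compared with the grid cubes of side $1/v$: a ball of radius $r$ around $z$ contains a grid cube of side $1/v$ whenever $1/v \asymp r$, up to a bounded multiplicative loss in the density comparison (a grid cube of side $1/v$ sitting inside a cube of side $Cr$ around $z$ has relative measure in that larger cube at least $C^{-d}$, so the $\geq\alpha$ lower bound on $A$ inside the grid cube forces relative measure $\geq C^{-d}\alpha$ of $A$ in the cube around $z$). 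Choosing $\eps < C^{-d}\alpha$ contradicts the assumption that $z$ is a density point of $[0,1]^d\setminus A$. Hence $\alpha\in\{0,1\}$.

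The main obstacle I anticipate is making the comparison between the scaling-invariant grid cubes and the shrinking neighborhoods of the density point fully rigorous with the right constants—one has to choose, for each scale $r$, an integer $v$ with $1/v$ comparable to $r$ and locate a translate of $[0,\tfrac1v]^d$ inside the cube of side (say) $4r$ about $z$, then chase the relative-measure inequalities through the two nested cubes. This is routine but needs care; alternatively, one can sidestep it by invoking the density point at a subsequence of scales $r = 1/v$ directly (taking cubes of side $1/v$ \emph{centered} at $z$ and noting such a cube is covered by at most $2^d$ grid cubes of side $1/v$, one of which meets $A$ in measure $\geq \alpha/v^d$, giving relative measure $\geq 2^{-d}\alpha$ in the centered cube), which cleanly contradicts $z$ being a density point of the complement. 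I would present this cleaner version.
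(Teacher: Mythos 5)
Your overall strategy is the same as the paper's: combine the contraction invariance $\tfrac{1}{v}\cdot\fixy(y)\subseteq\fixy(y)$ from Lemma~\ref{lem:diagtimesPi} with the $\ZZ^d$-periodicity of $\fixy(y)$ and the Lebesgue Density Theorem (the paper takes a density point of $\fixy(y)$ itself and concludes its measure in $[0,1]^d$ must be $1$; you take a density point of the complement, which is a perfectly workable mirror image). However, the ``cleaner version'' that you say you would actually present contains a genuine gap. Knowing that each of the at most $2^d$ grid cubes $Q$ of side $1/v$ covering the cube $C$ of side $1/v$ centered at $z$ satisfies $\Leb(A\cap Q)\geq\alpha/v^d$ gives \emph{no} lower bound on $\Leb(A\cap C)$: such a $Q$ only partially overlaps $C$, and the $A$-mass of $Q$ could lie entirely in $Q\setminus C$. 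So the claimed relative density $\geq 2^{-d}\alpha$ of $A$ in the centered cube does not follow, and the contradiction with $z$ being a density point of the complement evaporates as written.

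The repair is easy, and both possible repairs are already implicit in your write-up. Either keep your first version: the cube $C$ of side $2/v$ centered at $z$ contains the whole grid cube containing $z$, so $\Leb(A\cap C)\geq \alpha/v^d = 2^{-d}\alpha\,\Leb(C)$, and letting $v\to\infty$ contradicts $z$ being a density point of $[0,1]^d\setminus A$. Or, better, drop the grid entirely: since $\fixy(y)$ is $1$-periodic (as you correctly checked), the set $\tfrac{1}{v}\cdot\fixy(y)$ is $(1/v)$-periodic, so its measure in \emph{every} cube of side $1/v$ --- aligned to the grid or not, in particular the one centered at $z$ --- equals exactly $\alpha/v^d$; hence the relative density of $A$ in arbitrarily small cubes around $z$ stays at least $\alpha$, and no covering argument is needed. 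This exact-periodicity observation is precisely the device in the paper's proof (see~(\ref{eq:symmdiff}) and~(\ref{eq:density2})); with it, your complement-density-point variant goes through cleanly.
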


\begin{proof}
  By Lemma~\ref{lem:diagtimesPi},
  \begin{equation*}
    \frac{1}{q}\cdot\fixy(y) \subseteq \fixy(y)
  \end{equation*}
  for all $q\in\NN$. But $\fixy(y)$ is $1$-periodic (in all $d$
  coordinate directions), and therefore $(1/q)\cdot\fixy(y)$ is
  $(1/q)$-periodic. It follows that
  \begin{equation*}
    \frac{1}{q}\cdot\fixy(y)\cap [0,1]^d\quad\textrm{and}\quad \fixy(y)\cap [0,1]^d
  \end{equation*}
  have the same Lebesgue measure, and since one is contained in the
  other, we have that
  \begin{equation}\label{eq:symmdiff}
    \Leb\parens*{\parens*{\frac{1}{q}\cdot\fixy(y)}\triangle \fixy(y)}=0,
  \end{equation}
  that is, the symmetric difference is null.

  Now, suppose that $\fixy(y)$ has positive Lebesgue measure and let
  $x\in\fixy(y)$ be a density point, meaning in particular that
  \begin{equation}\label{eq:density1}
    \Leb\parens*{x + \parens*{-\frac{1}{2q}, \frac{1}{2q}}^d\cap \fixy(y)} \sim \frac{1}{q^d} \quad (q\to\infty).
  \end{equation}
  But by~(\ref{eq:symmdiff}),
  \begin{multline*}
    \Leb\parens*{\parens*{x + \parens*{-\frac{1}{2q}, \frac{1}{2q}}^d} \cap \fixy(y)} \\
= \Leb\parens*{\parens*{x + \parens*{-\frac{1}{2q}, \frac{1}{2q}}^d}\cap \frac{1}{q}\cdot\fixy(y)},
  \end{multline*}
  and again by $1$-periodicity of $\fixy(y)$, the measure of the
  intersection of $(1/q)\cdot\fixy(y)$ with any $d$-cube of side-length
  $1/q$ is equal to $1/q^d$ times the measure of the intersection of
  $\fixy(y)$ with $[0,1]^d$, so we may conclude that
  \begin{equation}\label{eq:density2}
    \Leb\parens*{\parens*{x + \parens*{-\frac{1}{2q},\frac{1}{2q}}^d}\cap \fixy(y)} = \frac{1}{q^d}\cdot \Leb\parens*{[0,1]^d\cap \fixy(y)}.
  \end{equation}
  Finally,~(\ref{eq:density1}) and~(\ref{eq:density2}) together imply
  that $[0,1]^d\cap \fixy(y)$ has Lebesgue measure $1$, hence $\fixy(y)$
  is full, by periodicity.
\end{proof}

Our question above becomes: Do there exist $y\in\RR^d$ for which $\fixy(y)$ has
  full measure?

  Given that the sets $\Pi$ and $\fixx(x)$ have zero Lebesgue measure,
  it is natural to ask: What are their Hausdorff dimensions? Since
  $\pi(\Pi)$ has full measure, it therefore has Hausdorff dimension
  $d$, hence we have that the Hausdorff dimension of $\Pi$ is at least
  $d$. What is it exactly? In the $d=1$ case, one can be explicit in
  the proof of Theorem~\ref{thm} using continued fractions, and find
  $x\in\RR$ such that $\fixx(x)$ contains Cantor-type sets whose
  dimensions we can calculate. Perhaps this kind of strategy can be
  pushed, and even adapted for general $d$.

\section{Weighted approximation}
\label{sec:weights}

Let $\simplex \overset{\textrm{def}}{=} \set*{r\in\RR^d : r_i \geq 0, \sum_i r_i = 1}$ be the standard simplex. For
$r=(r_i)\in\simplex$ and $x=(x_i)\in\RR^d$, denote
\begin{equation*}
  \norm{x}_r \overset{\textrm{def}}{=} \parens*{\max_{1\leq i\leq d}\norm{x_i}^{1/r_i}}^{1/d},
\end{equation*}
and notice that $\norm{\cdot}_r$ coincides with $\norm{\cdot}$ (in $\RR^d$) when
$r = (1/d, \dots, 1/d)$.  Define
\begin{equation*}
  \bad(r) \overset{\textrm{def}}{=} \set*{x\in\RR^d :  \inf n\,\norm{nx}_r^d > 0},
\end{equation*}
the set of $r$-weighted badly approximable vectors. For $\psi:\NN\to\RR_{\geq 0}$ and $r\in\simplex$, let
\begin{equation*}
  W(r,\psi) \overset{\textrm{def}}{=} \set*{x\in\RR : (\exists^\infty n\in\NN)\, \norm{nx}_r < \psi(n)}.
\end{equation*}
Let
\begin{equation*}
  \Pi_r \overset{\textrm{def}}{=} \bigcap_{\psi\in\calD} W(r,\psi). 
\end{equation*}
Finally, let
\begin{equation*}
  V(r, \psi) \overset{\textrm{def}}{=} \set*{x\in\RR^d : (\textrm{a.e. } y\in\RR^d)(\exists^\infty n\in\NN)\, \norm{nx+y}_r<\psi(n)}.
\end{equation*}
Kurzweil's Theorem has the following generalization to the weighted
case, due to S.~Harrap.
\begin{theorem}[\cite{Harraptwisted}]
  \begin{equation*}
    (\forall r\in\simplex)\quad\bad(r) = \bigcap_{\Psi\in\calD}V(r, \Psi).
  \end{equation*}
\end{theorem}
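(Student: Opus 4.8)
The plan is to prove the two inclusions separately: for fixed $x\in\RR^d$, (i) if $x\in\bad(r)$ then $x\in V(r,\psi)$ for every $\psi\in\calD$, and (ii) if $x\notin\bad(r)$ then $x\notin V(r,\psi)$ for some $\psi\in\calD$. Fix $x$ and $\psi$ and write $C_n:=\set*{y\in[0,1]^d : \norm{nx+y}_r<\psi(n)}$, a box of measure $\asymp_r\psi(n)^d$; then $\set*{y\in[0,1]^d : (\exists^\infty n)\,\norm{nx+y}_r<\psi(n)}=\limsup_n C_n$, so that $x\in V(r,\psi)$ precisely when $\Leb(\limsup_n C_n)=1$. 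Since $\psi\in\calD$, we have $\sum_n\Leb(C_n)\asymp_r\sum_n\psi(n)^d=\infty$, so in both directions the issue is how much the $C_n$ overlap. Exactly as in the proof of Theorem~\ref{lem:Pix01}, $\limsup_n C_n$ is forward-invariant under the measure-preserving translation $y\mapsto y+x$ of $[0,1]^d$ (if $\norm{(n+1)x+y}_r<\psi(n+1)\leq\psi(n)$ then $\norm{nx+(y+x)}_r<\psi(n)$), so when the orbit $\set*{nx\,(\bmod\,1)}_{n\geq1}$ is dense---equivalently, when $y\mapsto y+x$ is ergodic---this set has Lebesgue measure $0$ or $1$; and when the orbit is not dense it is contained in a proper subtorus and so is null, while in that case $x\notin\bad(r)$ as well (an integer relation $a\cdot x\in\ZZ$ with $a\neq0$ confines the orbit to a subtorus of dimension $<d$, which forces $\inf_n n\norm{nx}_r^d=0$). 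Hence for (i) it suffices to produce \emph{positive} measure, and for (ii) to rule out \emph{full} measure.

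For (i), let $x\in\bad(r)$, so the orbit is dense. Set $c:=\inf_n n\norm{nx}_r^d>0$. The weighted form of Lemma~\ref{lem:badreturn} reads $\norm{mx}_r\geq(c/m)^{1/d}$, which separates the orbit---$\set*{nx\,(\bmod\,1)}_{1\leq n\leq N}$ is $(c/N)^{1/d}$-separated in $\norm{\cdot}_r$---and, together with the quasi-triangle inequality satisfied by $\norm{\cdot}_r$, gives the count $\#\set*{1\leq k\leq N : \norm{kx}_r<\delta}\ll_x N\delta^d+1$; moreover it is classical (via the Dani correspondence, under which $x\in\bad(r)$ corresponds to boundedness of the associated orbit of the $r$-weighted diagonal flow) that this same orbit is a $C_xN^{-1/d}$-dense $\norm{\cdot}_r$-net for every $N$. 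If $\psi(n)\geq C_xn^{-1/d}$ for infinitely many $n$, then for each such $n$ the boxes of radius $\psi(n)$ about the net points already cover $[0,1]^d$, and we are done; otherwise one partitions the $C_n$ into consecutive blocks and runs a divergence Borel--Cantelli (second-moment) argument to conclude $\Leb(\limsup_n C_n)>0$, hence $=1$. I expect this to be the main obstacle: the blocks must be chosen---balancing block length against the size of $\psi$ there, and using the near-equidistribution of the bounded orbit rather than merely its separation---so that each block's union has measure bounded below and distinct blocks are quasi-independent, while the masses still sum to infinity. The delicate regime is the critical scale $\psi(n)\sim n^{-1/d}$, where the $C_n$ are neither essentially disjoint nor negligible.

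For (ii), let $x\notin\bad(r)$, so $\liminf_n n\norm{nx}_r^d=0$, and choose a sparse sequence $q_1<q_2<\cdots$ with $\norm{q_jx}_r$ decreasing and $q_j\norm{q_jx}_r^d\to0$ as rapidly as we wish. Near an integer multiple $lq_j$ the orbit value $(lq_j)x\,(\bmod\,1)$ remains near $0$, with $\norm{(lq_j)x}_r$ growing only slowly in $l$; exploiting this, one constructs a non-increasing $\psi$ with $\sum_n\psi(n)^d=\infty$ for which the boxes $C_n$ pile up---with large multiplicity, over sets of small measure---along blocks adapted to the $q_j$, so that $\limsup_n C_n$ is Lebesgue-null by the first Borel--Cantelli lemma applied blockwise; hence $x\notin V(r,\psi)$. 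The requirement that $\psi$ be non-increasing is what constrains this construction, dictating how large the ratios $q_{j+1}/q_j$ and the overlap multiplicities must be taken; the details are Harrap's~\cite{Harraptwisted}. Throughout, the argument parallels Kurzweil's proof of the unweighted case, with sup-norm nets replaced by $\norm{\cdot}_r$-nets and $\bad$ by $\bad(r)$.
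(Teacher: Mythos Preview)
This theorem is not proved in the present paper; it is quoted from Harrap~\cite{Harraptwisted} as a known generalization of Kurzweil's Theorem and then used as input (for instance in the weighted analogue of Theorem~\ref{lem:Pix01}). There is therefore no proof here to compare your sketch against.

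That said, your outline is broadly the standard Kurzweil--Harrap strategy: a divergence Borel--Cantelli/second-moment argument for the inclusion $\bad(r)\subseteq\bigcap_\psi V(r,\psi)$, and an explicit construction of a bad $\psi$ for the reverse inclusion. You correctly identify the $0$--$1$ dichotomy via invariance of $\limsup_n C_n$ under the toral translation $y\mapsto y+x$, and you are honest that the quasi-independence estimate in~(i) and the construction in~(ii) are the real content, which you defer to~\cite{Harraptwisted}. So this is a reasonable roadmap, but not a proof.

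One point that would need care if you were to flesh this out: your assertion that non-density of $\set{nx\,(\bmod\,1)}$ forces $x\notin\bad(r)$ is true but not immediate from a single rational relation, and in any case is unnecessary---in Kurzweil-type arguments direction~(ii) is handled uniformly from $\liminf_n n\norm{nx}_r^d=0$, without splitting into dense and non-dense cases.
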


We have the following weighted version of Theorem~\ref{thm}.

\begin{theorem}\label{thm:weigthed}
  \begin{equation*}
(\forall r\in\simplex)\quad\well(r) = \pi\parens*{\Pi_r} \qquad\textrm{hence}\qquad \bad(r) = \RR^d\setminus \pi\parens*{\Pi_r}.
\end{equation*}
\end{theorem}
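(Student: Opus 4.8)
The strategy is to rerun the proof of Theorem~\ref{thm:restatement} with the sup-norm $\norm{\cdot}$ replaced throughout by the weighted distance $\norm{\cdot}_r$, working with the sums
\[
  S^r_\ell(x,y)\overset{\textrm{def}}{=}\sum_{n\geq\ell}\min_{\ell\leq m\leq n}\norm{mx+y}_r^d .
\]
First I would record the weighted analogues of Lemmas~\ref{lem:SPis} and~\ref{lem:ellplusone}: that $(x,y)\in\Pi_r$ iff $S^r_\ell(x,y)<\infty$ for every $\ell\in\NN$, and that $S^r_\ell(x,y)\leq\norm{\ell x+y}_r^d+S^r_{\ell+1}(x,y)$. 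Both proofs transcribe verbatim, since they use only nonnegativity of $\norm{\cdot}_r^d$, the definition of $\calD$ by divergence of $\sum_n\psi(n)^d$, and telescoping of minima. The same is true of Lemma~\ref{lem:badreturn}: if $x\in\bad(r)$ and $c=c(x)=\inf n\,\norm{nx}_r^d>0$, then $\norm{nx}_r\leq\eps$ forces $c\leq n\,\norm{nx}_r^d\leq n\eps^d$, hence $n\geq c/\eps^d$.

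The one genuinely new point is that $\norm{\cdot}_r$ is not a norm, so I would insert a coordinate-wise substitute for the triangle inequality. Writing $\norm{t}$ for the distance from $t\in\RR$ to $\ZZ$, one has for every $v\in\RR^d$ and each $i$ that $\norm{v_i}\leq\norm{v}_r^{d r_i}$, while $\norm{v}_r^d=\max_i\norm{v_i}^{1/r_i}$; applying the one-dimensional triangle inequality in each coordinate and reassembling yields $\norm{u+v}_r^d\leq 2^{1/r_{\min}}\max\set*{\norm{u}_r^d,\norm{v}_r^d}$, where $r_{\min}=\min_i r_i$. With this in hand, the $\bad(r)$-direction runs as before. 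Fix $x\in\bad(r)$ and $y\in\RR^d$, and let $t_1<t_2<\cdots$ be the best inhomogeneous approximations of $(x,y)$ with respect to $\norm{\cdot}_r$. If this list is infinite, consecutive records satisfy $\norm{(t_{k+1}-t_k)x}_r^d\leq 2^{1/r_{\min}}\norm{t_k x+y}_r^d$, so Lemma~\ref{lem:badreturn} gives $t_{k+1}-t_k\geq c\,2^{-1/r_{\min}}\norm{t_k x+y}_r^{-d}$, and therefore $S^r_1(x,y)=\sum_k\norm{t_k x+y}_r^d(t_{k+1}-t_k)\geq\sum_k c\,2^{-1/r_{\min}}=\infty$. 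The list can be finite only if $mx+y\in\ZZ^d$ for some $m$; then $\norm{nx+y}_r=\norm{(n-m)x}_r$ for $n>m$, and since $\norm{jx}_r^d\geq c/j$ for all $j\geq1$ we get $\min_{1\leq j\leq N}\norm{jx}_r^d\geq c/N$, whence $S^r_{m+1}(x,y)=\sum_{N\geq1}\min_{1\leq j\leq N}\norm{jx}_r^d\geq c\sum_N 1/N=\infty$. Either way the weighted Lemma~\ref{lem:SPis} gives $(x,y)\notin\Pi_r$, and since $y$ was arbitrary, $\pi(\Pi_r)\subseteq\well(r)$.

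For the reverse inclusion I would imitate the construction in the proof of Theorem~\ref{thm:restatement}. If $x\in\QQ^d$ then $\norm{nx}_r=0$ for infinitely many $n$, so $(x,0)\in\Pi_r$ and $x\in\pi(\Pi_r)$ (note $\QQ^d\subseteq\well(r)$). If $x\in\well(r)\setminus\QQ^d$, choose a sequence realizing $\inf n\,\norm{nx}_r^d=0$; since $x\notin\QQ^d$ it must tend to infinity, so $\norm{nx}_r\to0$ along it, and I can thin to a subsequence $\set{n_k}$ with $\sum_k n_k\norm{n_k x}_r^d<\infty$ and $\norm{n_{k+1}x}_r\leq\tfrac12\norm{n_k x}_r$. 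Put $-y=\sum_k(n_k x-a_k)$, with $a_{k,i}$ the nearest integer to $(n_k x)_i$, and $N_K=\sum_{k<K}n_k$. Coordinate-wise subadditivity of distance to $\ZZ$ gives $\norm{(N_K x+y)_i}\leq\sum_{k\geq K}\norm{(n_k x)_i}\leq\sum_{k\geq K}\norm{n_k x}_r^{d r_i}\ll_r\norm{n_K x}_r^{d r_i}$ by the geometric decay, hence $\norm{N_K x+y}_r^d\ll_r\norm{n_K x}_r^d$; so for any $\ell$ and $N_{K_\ell}\geq\ell$,
\[
  S^r_{N_{K_\ell}}(x,y)\leq\sum_{K\geq K_\ell}\norm{N_K x+y}_r^d\,n_K\ll_r\sum_{K\geq K_\ell}n_K\norm{n_K x}_r^d<\infty,
\]
and by the weighted Lemma~\ref{lem:ellplusone} every $S^r_\ell(x,y)$ is finite, so the weighted Lemma~\ref{lem:SPis} gives $(x,y)\in\Pi_r$, i.e.\ $x\in\pi(\Pi_r)$. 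The two inclusions give $\well(r)=\pi(\Pi_r)$, and complementation gives $\bad(r)=\RR^d\setminus\pi(\Pi_r)$. I expect the only real work beyond transcription to be bookkeeping the $r$-dependent constants, principally the factor $2^{1/r_{\min}}$ in the best-approximation/return step and the constants $\bigl(1-2^{-d r_i}\bigr)^{-1}$ from summing the geometric series coordinate-wise in the construction step.
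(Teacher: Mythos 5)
Your proposal is correct and follows essentially the same route as the paper, whose entire proof of this theorem is the single observation that the arguments of \S\ref{sec:proof} remain valid when $\norm{\cdot}$, $W(\psi)$, $\bad$, $\well$, $\fixx(x)$ are replaced by their weighted analogues; you have simply carried out that transcription in detail, including the one genuinely new ingredient, the quasi-norm inequality $\norm{u+v}_r^d\leq 2^{1/r_{\min}}\max\set{\norm{u}_r^d,\norm{v}_r^d}$ which replaces the factor $2^d$ in the best-approximation step, and the coordinate-wise summation in the construction of $y$. (One minor slip that costs nothing: a finite list of best inhomogeneous approximations need not force $mx+y\in\ZZ^d$ for some $m$ --- the orbit $\set{nx+y}$ might simply stay a positive distance from $\ZZ^d$ --- but in that case every term of $S^r_1(x,y)$ is bounded below by a positive constant, so the divergence you need is immediate.)
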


\begin{proof}
  All statements in~\S\ref{sec:proof} remain true if all instances of
  \begin{equation*}
  \norm{\cdot}, W(\psi), \bad, \well, \fixx(x)
\end{equation*}
are replaced with
\begin{equation*}
\norm{\cdot}_r, W(r, \psi), \bad(r), \well(r), \fixx_r(x),
\end{equation*}
their weighted analogues.
\end{proof}

The arguments from~\S\ref{sec:thm2} also hold for weighted
approximations, giving us the following.

\begin{theorem}\label{thm2:weighted}
  For any $r\in\simplex$ the set $\Pi_r$ is Lebesgue measurable and
  has measure $0$. In fact, for every $x\in\RR^d$, the set
  $\fixx_r(x)\subset\RR^d$ has Lebesgue measure $0$.
\end{theorem}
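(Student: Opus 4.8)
Fix $r\in\simplex$. The plan is to transcribe \S\ref{sec:thm2}, replacing $\norm{\cdot}$, $W(\psi)$, $\Pi$, $\bad$, $\well$, and $\fixx(x)$ everywhere by their $r$-weighted analogues $\norm{\cdot}_r$, $W(r,\psi)$, $\Pi_r$, $\bad(r)$, $\well(r)$, and $\fixx_r(x)$. As in the unweighted case it is enough to prove two things: that $\Pi_r$ and each fiber $\fixx_r(x)$ are Lebesgue measurable, and that $\fixx_r(x)$ has Lebesgue measure $0$ for every $x\in\RR^d$. Nullity of $\Pi_r$ then follows from Fubini's Theorem, since $\Pi_r$ is measurable and each of its $x$-sections equals $\fixx_r(x)$, hence is null.

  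For measurability, I would observe first that Lemma~\ref{lem:restrictint} makes no reference to the norm---its proof uses only the divergence of $\sum_n\psi(n)^d$---so it applies verbatim, allowing us to write $\Pi_r=\bigcap_{\bk\in\DD}W(r,\bk)$. The proof of Lemma~\ref{lem:measurable} then carries over word for word: the norm enters only through the assertion that, for fixed $n$ and $\ell$, the set $\set*{(\bk,x,y) : \bk(n)=\ell \text{ and } \norm{nx+y}_r\geq 1/\ell}$ is Borel, which holds because $\norm{\cdot}_r$ is continuous. Hence $\Pi_r$, $\fixy_r(y)$, and $\fixx_r(x)$ are coanalytic, in particular Lebesgue measurable.

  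For the measure computation I would follow the proof of Theorem~\ref{lem:Pix01} line by line. Fix $x\in\RR^d$ and let $T$ be the closure of $\set{nx\,(\bmod\,1)}_{n\geq 0}$ in $[0,1]^d$. If $T$ is a proper subtorus, then for any $y\notin T$ the sup-norm $\norm{nx+y}$ is bounded below uniformly in $n$ by the positive distance from $y$ to $T$; since $t\mapsto t^{1/r_i}$ is increasing on $[0,1/2]$ and each $r_i\leq 1$, this yields a uniform positive lower bound for $\norm{nx+y}_r$ as well, so $(x,y)\notin W(r,\psi)$ for any $\psi\in\calD$ with $\psi(n)\to 0$; therefore $\fixx_r(x)\subseteq T$ is null. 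If instead $T=[0,1]^d$, then translation by $x$ is ergodic on $[0,1]^d$, and the implication ``$\norm{(n+1)x+y}_r<\psi(n+1)$ implies $\norm{nx+(y+x)}_r<\psi(n+1)\leq\psi(n)$'' shows $\fixx_r(x)+x\subseteq\fixx_r(x)$; thus $\fixx_r(x)\cap[0,1]^d$ is an invariant measurable set and so has measure $0$ or $1$. When $x\notin\bad(r)$, Harrap's weighted version of Kurzweil's Theorem~\cite{Harraptwisted} supplies some $\psi\in\calD$ for which $W_x(r,\psi)\cap[0,1]^d$ is not full, where $W_x(r,\psi)$ is the $r$-weighted analogue of the set $W_x(\psi)$ from the proof in \S\ref{sec:thm2}; since $\fixx_r(x)=\bigcap_{\psi\in\calD}W_x(r,\psi)$, the set $\fixx_r(x)\cap[0,1]^d$ is not full, hence null. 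When $x\in\bad(r)$, Theorem~\ref{thm:weigthed}---equivalently, its restatement $x\in\bad(r)\iff\fixx_r(x)=\emptyset$---gives $\fixx_r(x)=\emptyset$. In every case $\fixx_r(x)$ has Lebesgue measure $0$.

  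I do not expect a genuine obstacle here; the argument is essentially a routine transcription. The only two places where the weighted structure is really used are the one-line lower bound for $\norm{\cdot}_r$ off a proper subtorus and the invocation of Harrap's weighted Kurzweil Theorem to produce the needed failure of fullness. The remaining effort is bookkeeping---checking that each object from \S\ref{sec:thm2} has been correctly matched with its $r$-weighted counterpart, just as was done for \S\ref{sec:proof} in the proof of Theorem~\ref{thm:weigthed}.
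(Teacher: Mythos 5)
Your proposal is correct and matches the paper's proof exactly: the paper's argument for Theorem~\ref{thm2:weighted} is precisely the observation that the measurability argument (Lemmas~\ref{lem:restrictint} and~\ref{lem:measurable}) and the measure computation of \S\ref{sec:thm2} carry over verbatim to the weighted setting, with Harrap's weighted Kurzweil Theorem replacing Kurzweil's Theorem and Theorem~\ref{thm:weigthed} handling the $\bad(r)$ case. Your transcription fills in the only details worth checking (continuity of $\norm{\cdot}_r$ for the Borel step, the uniform lower bound off a proper subtorus, and the shift-invariance giving the $0$-$1$ law), so there is nothing to add.
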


\section{Higher irrationality exponents}
\label{sec:exponents}

For $\sigma\geq 1/d$, consider the set
\begin{equation*}
  \sigma\bad \overset{\textrm{def}}{=} \set*{x\in\RR : \liminf_{n\to\infty}n^\sigma\,\norm{nx} > 0}
\end{equation*}
of $\sigma$-\emph{badly approximable vectors}. Naturally, 
\begin{equation*}
  \sigma\well \overset{\textrm{def}}{=}\RR\setminus\sigma\bad
\end{equation*}
is referred to as the set of $\sigma$-\emph{well approximable
  vectors}. Notice that $\sigma\bad$ and $\sigma\well$ coincide with
the usual badly approximable and well approximable vectors when
$\sigma=1/d$.

Kurzweil's Theorem has been generalized to this setting by J.~Tseng. For $t>0$, let
\begin{equation*}
  \calD^t = \set*{\psi^t : \psi \in\calD}.
\end{equation*}
Tseng proves the following.

\begin{theorem}[\cite{TsengSTPs}]\label{thm:tseng}
  \begin{equation*}
    (\forall \sigma \geq 1/d)\quad\sigma\bad = \bigcap_{\psi\in\calD^{1/\sigma}} V(\psi).
  \end{equation*}
\end{theorem}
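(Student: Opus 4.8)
The plan is to adapt the proof of Kurzweil's Theorem while carrying the exponent $\sigma$ through every estimate; there is no linear change of variables that absorbs $\sigma$, so a direct appeal to Kurzweil's Theorem does not suffice. Work on the circle, write $(q_k)$ for the best-approximation denominators of an irrational $x$, and recall $\norm{q_kx}\asymp 1/q_{k+1}$. Since $\inf_n n^{\sigma}\norm{nx}=\inf_k q_k^{\sigma}\norm{q_kx}$, membership $x\in\sigma\bad$ is equivalent to $q_{k+1}\ll q_k^{\sigma}$ for every $k$, with implied constant depending only on $x$; this is the homogeneous shadow of the inhomogeneous statement. The first move is a reduction this paper has already used: for irrational $x$ the set $W_x(\psi)=\set*{y:(\exists^\infty n)\,\norm{nx+y}<\psi(n)}$ satisfies $W_x(\psi)+x\subseteq W_x(\psi)$ because $\psi$ is non-increasing (as in the proof of Theorem~\ref{lem:Pix01}), so by ergodicity of the rotation it has measure $0$ or $1$; hence $x\in V(\psi)$ if and only if $W_x(\psi)$ has \emph{positive} Lebesgue measure. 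If $x\in\QQ$ then $\norm{nx+y}$ runs through finitely many values on each coset of $\QQ$, so $x\notin V(\psi)$ whenever $\psi(n)\to 0$, while also $x\notin\sigma\bad$; thus both sides omit the rationals, and only irrational $x$ remain.

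For $\sigma\bad\subseteq\bigcap_{\psi\in\calD^{1/\sigma}}V(\psi)$, fix an irrational $x\in\sigma\bad$ and a non-increasing $\psi$ with $\sum_n\psi(n)^{\sigma}=\infty$. One builds $W_x(\psi)$ along the renewal times $q_k$: by the three-distance theorem the orbit arc $\set*{nx:q_k\leq n<q_{k+1}}$ subdivides the circle at scales ranging between $\norm{q_{k-1}x}$ and $\norm{q_kx}$ in a manner that is, after rescaling, nearly independent of the orbit before time $q_k$, and $q_{k+1}\ll q_k^{\sigma}$ controls how those scales decay. A second-moment estimate for the counting function $\#\set*{n\leq N:\norm{nx+y}<\psi(n)}$---whose mean over $y$ is $\asymp\sum_{n\leq N}\psi(n)\to\infty$ (here $\sum\psi^{\sigma}=\infty$ forces $\sum\psi=\infty$), and whose variance the $\sigma$-controlled spacing bounds by $\ll(\text{mean})^2$---then yields $\Leb(W_x(\psi))>0$ by the divergence Borel--Cantelli lemma, whence $x\in V(\psi)$.

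For the reverse inclusion we argue by contraposition: if $x$ is irrational and $x\notin\sigma\bad$, then $q_{k_j+1}/q_{k_j}^{\sigma}\to\infty$ along a subsequence $(k_j)$, and we must produce a non-increasing $\psi$ with $\sum_n\psi(n)^{\sigma}=\infty$ for which $\bigcup_{n\geq m}B(-nx,\psi(n))$ fails to cover the circle for some $m$---equivalently $W_x(\psi)$ is not full, so $x\notin V(\psi)$. This is the $\sigma$-version of the construction in Kurzweil's proof: whenever $q_{k+1}/q_k^{\sigma}$ is large the orbit carries, just past time $q_k$, a gap of length $\asymp\norm{q_{k-1}x}$ that is filled only over the long stretch of time up to $q_{k+1}$; choosing $(k_j)$ sparse and letting $\psi$ decay along these blocks fast enough that a fixed positive-measure band of starting points stays inside the gaps beyond some time, yet slowly enough that $\sum\psi^{\sigma}$ still diverges, produces such a $\psi$.

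The crux is this last balancing act together with the second-moment bound: one must control the pair correlations of $\set*{nx\bmod 1}$ against the radii $\psi(n)$ using only the exponent-$1/\sigma$ badness of $x$, and arrange constants so that the series governing everything is exactly $\sum_n\psi(n)^{\sigma}$---not $\sum_n\psi(n)$, nor $\sum_n\psi(n)^{\sigma'}$ for a nearby $\sigma'$. In the classical case $\sigma=1/d$ this is the heart of the proof of Kurzweil's Theorem; the content here is that the three-distance spacing estimates and the target measures rescale compatibly under $\psi\mapsto\psi^{1/\sigma}$, so the same bookkeeping survives. Everything else---the three-distance input, the ergodic $0$-$1$ reduction, and the choice of the thinning and the precise decay of $\psi$ between consecutive critical times---is routine once that matching is in place.
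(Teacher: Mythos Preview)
The paper does not give a proof of this statement at all: Theorem~\ref{thm:tseng} is quoted from Tseng's paper \cite{TsengSTPs} as background, in the same way that Kurzweil's Theorem is quoted without proof in~\S\ref{sec:results}. There is therefore nothing in this paper to compare your sketch against; the author simply cites the result and then proceeds to prove the analogue of Theorem~\ref{thm} (namely Theorem~\ref{thm:exponents}), invoking Theorem~\ref{thm:tseng} only implicitly through the weighted/exponent versions of the $0$--$1$ reduction.

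As a standalone sketch your outline is reasonable for $d=1$: the ergodic $0$--$1$ reduction, the characterization $x\in\sigma\bad\iff q_{k+1}\ll q_k^{\sigma}$, the second-moment/divergence Borel--Cantelli argument for the inclusion $\sigma\bad\subseteq\bigcap V(\psi)$, and the construction of a bad $\psi$ along a sparse subsequence of anomalously large $q_{k+1}/q_k^{\sigma}$ for the reverse inclusion are indeed the expected ingredients, and they mirror Kurzweil's original scheme. Two caveats. First, you have restricted yourself to the circle, whereas the statement in the paper is made for $\sigma\geq 1/d$ with $\norm{\cdot}$ on $\RR^d$; in higher dimensions there is no continued-fraction crutch and the three-distance theorem is unavailable, so the sketch as written does not cover the stated generality. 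Second, your claim that ``$\sum\psi^{\sigma}=\infty$ forces $\sum\psi=\infty$'' requires $\psi\leq 1$ and $\sigma\geq 1$, which is fine on the circle but would need adjustment (to $\sum\psi^{\sigma d}$) in dimension $d>1$. None of this matters for the present paper, since the result is merely cited.
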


Theorems~\ref{thm} and~\ref{thm2} also transfer easily to higher
irrationality exponents. Let
\begin{equation*}
  \sigma\Pi = \bigcap_{\psi\in\calD^\sigma} W(\psi).
\end{equation*}
We have

\begin{theorem}\label{thm:exponents}
  \begin{equation*}
    (\forall \sigma \geq 1/d)\quad\sigma\well = \pi\parens*{\sigma\Pi} \qquad\textrm{hence}\qquad \sigma\bad = \RR^d\setminus \pi\parens*{\sigma\Pi}.
  \end{equation*}
\end{theorem}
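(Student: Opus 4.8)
The plan is to transcribe the argument of~\S\ref{sec:proof} almost word for word, exactly as the proof of Theorem~\ref{thm:weigthed} recycled~\S\ref{sec:proof} for the weighted case, with the exponent $d$ of the auxiliary sums replaced by $d/\sigma$. First I would replace $S_\ell$ by
\begin{equation*}
  S_\ell^\sigma(x,y) \overset{\textrm{def}}{=} \sum_{n\geq\ell}\min_{\ell\leq m\leq n}\norm{mx+y}^{d/\sigma},
\end{equation*}
and record the elementary observation that a non-increasing $\chi:\NN\to\RR_{\geq 0}$ lies in $\calD^\sigma$ precisely when $\sum_n\chi(n)^{d/\sigma}$ diverges (write $\chi=\psi^\sigma$ with $\psi\in\calD$, so $\chi^{d/\sigma}=\psi^d$). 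Granting this, the proof of Lemma~\ref{lem:SPis} carries over verbatim to give
\begin{equation*}
  (x,y)\in\sigma\Pi \quad\iff\quad S_\ell^\sigma(x,y)<\infty \quad(\forall\ell\in\NN),
\end{equation*}
and Lemma~\ref{lem:ellplusone} becomes $S_\ell^\sigma(x,y)\leq\norm{\ell x+y}^{d/\sigma}+S_{\ell+1}^\sigma(x,y)$, with the identical one-line telescoping proof.

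Next I would establish the $\sigma$-analogue of Lemma~\ref{lem:badreturn}: for $x\in\sigma\bad$ there is a constant $c=c(x)>0$ with $\min\set{n\in\NN:\norm{nx}<\eps}\geq c\,\eps^{-d/\sigma}$ for every $\eps>0$, which is immediate from the definition of $\sigma\bad$ together with the fact that such an $x$ is irrational (so the relevant infimum is not spoiled by small $n$). With these three facts in hand, the first half of the proof of Theorem~\ref{thm:restatement} transfers directly: given $x\in\sigma\bad$ and $y\in\RR^d$, consecutive best inhomogeneous approximations $t_k$ satisfy $\norm{(t_{k+1}-t_k)x}<2\norm{t_kx+y}$, so $t_{k+1}-t_k\geq c\,2^{-d/\sigma}\norm{t_kx+y}^{-d/\sigma}$, whence
\begin{equation*}
  S_1^\sigma(x,y)=\sum_k\norm{t_kx+y}^{d/\sigma}(t_{k+1}-t_k)\geq\sum_k c\,2^{-d/\sigma},
\end{equation*}
which diverges. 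By the $\sigma$-analogue of Lemma~\ref{lem:SPis}, $(x,y)\notin\sigma\Pi$; since $y$ was arbitrary, $x\notin\pi(\sigma\Pi)$. Thus $\pi(\sigma\Pi)\subseteq\sigma\well$.

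For the reverse inclusion I would follow the second half of the proof of Theorem~\ref{thm:restatement}. Rational $x$ are disposed of exactly as there. For irrational $x\in\sigma\well$ the hypothesis $\liminf_n n^\sigma\norm{nx}=0$ yields a sequence $\set{n_k}$ along which $n_k^\sigma\norm{n_kx}\to0$; passing to a sufficiently sparse subsequence I can force both $\sum_k n_k\norm{n_kx}^{d/\sigma}<\infty$ and $\sum_{k\geq K}\norm{n_kx}\ll\norm{n_Kx}$, the analogues of~(\ref{eq:conv}) and~(\ref{eq:geometric}). Setting $-y=\sum_k(n_kx-a_k)$ with $a_k\in\ZZ^d$ and $N_K=\sum_{k<K}n_k$ as before, the same telescoping estimate gives $S^\sigma_{N_{K_\ell}}(x,y)\ll\sum_{K\geq K_\ell}n_K\norm{n_Kx}^{d/\sigma}<\infty$, hence $S^\sigma_\ell(x,y)<\infty$ for every $\ell$ and $(x,y)\in\sigma\Pi$; so $x\in\pi(\sigma\Pi)$. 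This gives $\sigma\well\subseteq\pi(\sigma\Pi)$, hence $\sigma\well=\pi(\sigma\Pi)$, and the equivalent statement $\sigma\bad=\RR^d\setminus\pi(\sigma\Pi)$ follows by complementation.

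The two places I would be most careful are the precise exponent in the $\sigma$-bad-approximability estimate (so that the product in the lower bound for $S_1^\sigma$ collapses to a positive constant) and, more substantially, the construction of the sequence $\set{n_k}$ in the $x\in\sigma\well$ case: in the classical situation $\sigma=1/d$ such a sequence, with both the convergence and the geometric-decay properties, is read off from the continued fraction expansion, which is unavailable here, but the sequence can be built greedily from $\liminf_n n^\sigma\norm{nx}=0$ by choosing each $n_{k+1}$ among the infinitely many $n$ with $n^\sigma\norm{nx}$ below a geometrically decaying threshold while also demanding $n_{k+1}>2n_k$ and $\norm{n_{k+1}x}<\tfrac12\norm{n_kx}$. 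Finally, everything in~\S\ref{sec:thm2} transfers the same way to give Theorem~\ref{thm2:exponents}: the descriptive-set-theoretic argument of Lemma~\ref{lem:measurable} is insensitive to the exponent, and the Lebesgue-density and ergodicity arguments run verbatim with $\calD$ replaced by $\calD^\sigma$.
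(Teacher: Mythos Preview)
Your proposal is correct and follows essentially the same route as the paper: redefine $S_\ell$ with the exponent adjusted for $\sigma$, observe that Lemmas~\ref{lem:SPis}, \ref{lem:ellplusone}, and~\ref{lem:badreturn} transfer with this change, and then rerun the two halves of the proof of Theorem~\ref{thm:restatement}. The paper's own proof is even more telegraphic than yours---it simply asserts that the logic goes through---so your added detail on constructing the sequence $\{n_k\}$ greedily (in lieu of continued fractions) is a welcome elaboration rather than a departure.

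One small discrepancy worth noting: you use the exponent $d/\sigma$ in $S_\ell^\sigma$, whereas the paper writes $1/\sigma$. Your exponent is the one dictated by the definition of $\calD^\sigma$ (since $\chi=\psi^\sigma$ with $\psi\in\calD$ gives $\sum\chi^{d/\sigma}=\sum\psi^d$), and it is what makes the Lemma~\ref{lem:SPis} argument go through for arbitrary $d$. The paper's $1/\sigma$ agrees with yours when $d=1$, which appears to be the setting tacitly intended in \S\ref{sec:exponents} (note the $\RR$ rather than $\RR^d$ in the definition of $\sigma\bad$, and the claim $\calD^\sigma\supset\calD$ used for Theorem~\ref{thm2:exponents}, which only holds for $\sigma\geq 1$). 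Your version is the correct general-$d$ formulation.
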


\begin{remark*}
  Notice that in Theorem~\ref{thm:exponents} the intersection is over
  $\psi\in\calD^\sigma$, while in Theorem~\ref{thm:tseng} the
  intersection is over $\psi\in\calD^{1/\sigma}$. 
\end{remark*}

\begin{proof}
  As in the proof of Theorem~\ref{thm:weigthed}, we only have to
  adjust the proof of Theorem~\ref{thm}. In this case, the main
  adjustment is to re-define $S_\ell(x,y)$ as
\begin{equation*}
  S_\ell (x,y) \overset{\textrm{def}}{=} \sum_{n\geq \ell}\min_{\ell\leq m\leq n}\norm{mx+y}^{1/\sigma}.
\end{equation*}
(Notice that if $\sigma = 1/d$, then we have it as originally
defined.) Then the proof of Lemma~\ref{lem:SPis} is easily adapted to
prove a version saying that membership in $\sigma\Pi$ is equivalent to
convergence of $S_\ell$, as before. Lemmas~\ref{lem:ellplusone}
and~\ref{lem:badreturn} both remain true if every instance of ``$d$'' is
replaced with ``$1/\sigma$.'' And finally the logic of the proof of
Theorem~\ref{thm} goes through with the new inputs.
\end{proof}

Theorem~\ref{thm2} generalizes to higher irrationality exponents even
more easily, because $\calD^\sigma \supset \calD$, and therefore
$\sigma\Pi\subset \Pi$.

\begin{theorem}\label{thm2:exponents}
  For every $\sigma\geq 1/d$, the set $\sigma\Pi$ is Lebesgue
  measurable and has measure $0$. In fact, all of its ``vertical''
  fibers have $d$-dimensional Lebesgue measure $0$.
\end{theorem}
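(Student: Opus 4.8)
The plan is to deduce Theorem~\ref{thm2:exponents} directly from Theorem~\ref{thm2} (i.e.\ Theorem~\ref{lem:Pix01}) and its proof, using the containment $\sigma\Pi\subseteq\Pi$. First I would observe that since $\sigma\geq 1/d$, every $\psi\in\calD$ satisfies $\psi^\sigma\in\calD^\sigma$, but more importantly $\calD\subseteq\calD^\sigma$: indeed if $\psi$ is non-increasing with $\sum_n\psi(n)^d=\infty$, then writing $\phi=\psi^{1/(\sigma d)}$ wait—rather, the cleaner point is that a function in $\calD^\sigma$ is of the form $\psi^\sigma$ with $\psi\in\calD$, and since $\sigma d\geq 1$ one checks that $\psi\in\calD$ itself has the form $\phi^\sigma$ for some $\phi\in\calD$ (take $\phi=\psi^{1/\sigma}$; it is non-increasing and $\sum\phi(n)^{\sigma d}=\sum\psi(n)^{d}$ diverges, wait that needs $\sigma d\ge 1$ together with the standard fact that divergence of $\sum a_n$ for a non-increasing sequence with $a_n\to 0$ implies divergence of $\sum a_n^{t}$ for $t\le 1$, applied with $t=1/(\sigma d)\le 1$ to $a_n=\psi(n)^d$). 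Hence $\psi=(\psi^{1/\sigma})^\sigma\in\calD^\sigma$, so $\calD\subseteq\calD^\sigma$ and therefore
\begin{equation*}
  \sigma\Pi=\bigcap_{\psi\in\calD^\sigma}W(\psi)\subseteq\bigcap_{\psi\in\calD}W(\psi)=\Pi.
\end{equation*}

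Next, measurability: I would run the same descriptive-set-theory argument as in Lemma~\ref{lem:measurable}, replacing $\DD$ by the set of non-decreasing integer sequences whose reciprocals raised to the power $1/\sigma$ form a divergent series (the analogue of Lemma~\ref{lem:restrictint} goes through verbatim with ``$d$'' replaced by ``$1/\sigma$'', since nothing in that proof used that $d$ is an integer). This shows $\sigma\Pi$ is coanalytic, hence Lebesgue measurable, and likewise each vertical fiber $\set{y:(x,y)\in\sigma\Pi}$ is coanalytic.

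For the measure-zero conclusion, since $\sigma\Pi\subseteq\Pi$ and each vertical fiber of $\sigma\Pi$ is contained in the corresponding vertical fiber $\fixx(x)$ of $\Pi$, Theorem~\ref{lem:Pix01} gives that every vertical fiber of $\sigma\Pi$ has $d$-dimensional Lebesgue measure $0$. Then Fubini (applied to the measurable set $\sigma\Pi\cap[0,1]^{2d}$, sliced in the inhomogeneous direction) forces $\Leb_{2d}(\sigma\Pi)=0$. I do not see a genuine obstacle here; the only point requiring a little care is the elementary divergence fact used to establish $\calD\subseteq\calD^\sigma$, and in fact the proof of Theorem~\ref{thm:exponents} above already invokes $\calD^\sigma\supseteq\calD$ and notes $\sigma\Pi\subseteq\Pi$, so this step is consistent with what the paper has already asserted. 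Thus the whole statement reduces to citing Theorem~\ref{lem:Pix01}, the Lemma~\ref{lem:measurable}-style argument, and Fubini.
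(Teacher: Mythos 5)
Your overall route is the paper's route: the entire content of the paper's proof is the one-line observation that $\calD\subseteq\calD^{\sigma}$, hence $\sigma\Pi\subseteq\Pi$, after which Theorem~\ref{thm2} does all the work. (In fact your descriptive-set-theoretic detour and the Fubini step are unnecessary: since Lebesgue measure is complete, $\sigma\Pi\subseteq\Pi$ and $\Leb(\Pi)=0$ already give measurability and nullity of $\sigma\Pi$, and likewise each vertical fiber of $\sigma\Pi$ sits inside the null set $\fixx(x)$; this is presumably what the paper means by ``even more easily.'')

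The genuine problem is the step you yourself single out, the inclusion $\calD\subseteq\calD^{\sigma}$, and your verification of it is incorrect as written. With $\phi=\psi^{1/\sigma}$, membership $\phi\in\calD$ requires divergence of $\sum_n\phi(n)^{d}=\sum_n\bigl(\psi(n)^{d}\bigr)^{1/\sigma}$, so the standard comparison fact must be applied with exponent $t=1/\sigma$, not $t=1/(\sigma d)$; your choice of $t$ only yields divergence of $\sum_n\phi(n)$, which is the $d=1$ condition. The corrected argument therefore needs $1/\sigma\leq 1$, i.e.\ $\sigma\geq 1$, and this is not a cosmetic issue: for $d\geq 2$ and $1/d\leq\sigma<1$ the inclusion $\calD\subseteq\calD^{\sigma}$ is simply false. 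For example, take $d=2$, $\sigma=1/2$, $\psi(n)=n^{-1/2}$: then $\sum\psi(n)^{2}=\sum 1/n$ diverges, so $\psi\in\calD$, but $\psi^{1/\sigma}=\psi^{2}$ has $\sum(\psi^{2})^{2}=\sum n^{-2}<\infty$, so $\psi\notin\calD^{\sigma}$. In that range one in fact has the opposite inclusion $\calD^{\sigma}\subseteq\calD$, hence $\sigma\Pi\supseteq\Pi$, so the containment strategy cannot prove the statement there at all; some genuinely different argument (e.g.\ rerunning the invariance/ergodicity proof of Theorem~\ref{lem:Pix01} with Tseng's theorem as input) would be required. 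To be fair, the paper asserts $\calD^{\sigma}\supset\calD$ without proof and so shares this issue (its Section~\ref{sec:exponents} is really coherent in the setting $d=1$, where $\sigma\geq 1/d$ means $\sigma\geq 1$ and your argument, with the exponent fixed to $t=1/\sigma$, is correct); but since you present a detailed verification, you should either restrict to $\sigma\geq 1$ (equivalently $d=1$) or supply a different proof of $\sigma\Pi\subseteq\Pi$, and you should also note that the divergence exponent defining $\calD^{\sigma}$ is $d/\sigma$, not $1/\sigma$, in your adaptation of Lemma~\ref{lem:restrictint}.
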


In this case, it would be more informative to study the Hausdorff
measures of $\sigma\Pi$ and its horizontal and vertical fibers.

\appendix

\subsection*{Acknowledgments}
\label{acknowledgments}

I thank Victor Beresnevich, Martin Goldstern, Cameron Hill, and Sanju
Velani for helpful conversations and correspondence. I would also like
to acknowledge the support my wife, Yeni, who was pregnant with our
second child when I started this project; without her, Luna and
Joaqu{\'i}n would certainly never let me pursue any projects at all.

\section{The case $d=1$}
\label{sec:d=1}

There are some things that are easier to prove when $d=1$, but
which may still be true in the general case. Chief among them is the
following refinement of Theorem~\ref{thm}.

\begin{theorem}\label{thm:fixxcardinality}
  \begin{equation*}
    \#\fixx(x) = 
    \begin{cases}
      \aleph_0 &\textrm{if}\quad x\in\QQ \\
      2^{\aleph_0} &\textrm{if}\quad x\in\well\setminus\QQ \\
      0&\textrm{if}\quad x\in\bad.
    \end{cases}
  \end{equation*}
\end{theorem}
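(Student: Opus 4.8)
The plan is to treat the three regimes separately, the key tools being Theorem~\ref{thm:restatement} (which settles the badly approximable case outright), the construction inside the proof of Theorem~\ref{thm}, and Lemma~\ref{lem:SPis}. When $x\in\bad$, Theorem~\ref{thm:restatement} gives $\fixx(x)=\emptyset$, so $\#\fixx(x)=0$ and there is nothing to do.

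For $x\in\well\setminus\QQ$ the inequality $\#\fixx(x)\leq 2^{\aleph_0}$ is automatic since $\fixx(x)\subseteq\RR$, so the task is to exhibit $2^{\aleph_0}$ distinct points; I do this by harvesting the freedom hidden in the proof of Theorem~\ref{thm}. That proof fixes a sequence $\set*{n_k}_{k\geq 0}\subseteq\NN$ with $\sum_k n_k\norm{n_kx}^d<\infty$ and $\sum_{k\geq K}\norm{n_kx}\ll\norm{n_Kx}$, and sets $-y=\sum_{k\geq 0}(n_kx-a_k)$ with $a_k$ the nearest integer to $n_kx$. Since $\norm{n_kx}\to 0$ I first thin $\set*{n_k}$ to a subsequence (kept with the same name) along which moreover $\sum_{k>K}\norm{n_kx}<\norm{n_Kx}$ for every $K$. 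Now for each \emph{infinite} $S\subseteq\NN$ put $-y_S=\sum_{k\in S}(n_kx-a_k)$. Running the chain of estimates from the proof of Theorem~\ref{thm} with the index set $S$ in place of $\NN$ gives $S_\ell(x,y_S)<\infty$ for every $\ell$ (via Lemma~\ref{lem:ellplusone}), so $y_S\in\fixx(x)$ by Lemma~\ref{lem:SPis}. And the map $S\mapsto y_S$ is injective: if $S\neq S'$ and $k^{\ast}=\min(S\triangle S')$, then all terms of index $<k^{\ast}$ cancel in $y_S-y_{S'}$, the surviving term of index $k^{\ast}$ has absolute value $\norm{n_{k^{\ast}}x}$, and the remaining terms (of index $>k^{\ast}$, indexed by $(S\triangle S')\cap(k^{\ast},\infty)$) contribute at most $\sum_{k>k^{\ast}}\norm{n_kx}<\norm{n_{k^{\ast}}x}$ in absolute value, so $y_S\neq y_{S'}$. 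Since the infinite subsets of $\NN$ number $2^{\aleph_0}$, this gives $\#\fixx(x)\geq 2^{\aleph_0}$.

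For $x\in\QQ$ I would compute $\fixx(x)$ explicitly, recovering the formula $\fixx(x)=\set*{y\in\QQ:(\ZZ x+y)\cap\ZZ\neq\emptyset}$. Write $x=p/q$ in lowest terms (with the convention $q=1$ when $x=0$). Then $n\mapsto\norm{nx+y}$ has period $q$ and its values over one period are exactly $\set*{\norm{j/q+y}:0\leq j<q}$, so the running minimum $\min_{\ell\leq m\leq n}\norm{mx+y}$ has stabilized at $\mu(y):=\min_{0\leq j<q}\norm{j/q+y}$ as soon as $n\geq\ell+q$. Hence $S_\ell(x,y)$ differs from $\sum_n\mu(y)^d$ by only finitely many terms, so $S_\ell(x,y)<\infty$ for all $\ell$ exactly when $\mu(y)=0$, i.e.\ when $y\in\tfrac1q\ZZ$; by Lemma~\ref{lem:SPis}, $\fixx(x)=\tfrac1q\ZZ$, which has cardinality $\aleph_0$. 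Finally $\tfrac1q\ZZ$ equals $\set*{y\in\QQ:(\ZZ x+y)\cap\ZZ\neq\emptyset}$, since $np+m\equiv 0\pmod q$ is solvable in $n$ for every $m$ (as $\gcd(p,q)=1$), while an irrational $y$ plainly has $(\ZZ x+y)\cap\ZZ=\emptyset$.

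I expect no serious obstacle; the only care points are bookkeeping. In the irrational case one must choose the thinning so that the tails are genuinely dominated and the sign cancellation produces the \emph{strict} inequality needed for injectivity (and one must note that ``uncountable'' would not by itself yield cardinality $2^{\aleph_0}$, since $\fixx(x)$ is only known to be coanalytic — this is exactly why the explicit injection from $2^{\NN}$ is needed). In the rational case one must handle the degenerate value $x=0$ and justify the ``stabilizes after $q$ steps'' claim. In $d=1$ one could alternatively run the irrational-case construction directly on the continued-fraction denominators of $x$; this is not needed for the cardinality count but has the bonus of exhibiting Cantor subsets of $\fixx(x)$ with computable Hausdorff dimension.
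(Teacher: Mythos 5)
Your proposal is correct and follows essentially the same route as the paper: the $\bad$ case is Theorem~\ref{thm:restatement}, the rational case is the finite-orbit computation underlying Proposition~\ref{prop:fixrationals} (your $\fixx(x)=\tfrac1q\ZZ$ is exactly the set described there), and the $\well\setminus\QQ$ case harvests the construction from the proof of Theorem~\ref{thm}. The one place you add substance is the explicit injection $S\mapsto y_S$ from the infinite subsets of $\NN$ into $\fixx(x)$: the paper only remarks that uncountability is evident from that proof, and your point that $\fixx(x)$ is only known to be coanalytic, so mere uncountability would not by itself yield cardinality $2^{\aleph_0}$, is a legitimate reason to spell out the injection (and your thinning step does make it work).
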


Theorem~\ref{thm} already tells us that $\fixx(x)=\emptyset$ when
$x\in\bad$, and its proof shows us that $\fixx(x)$ is uncountable
whenever $x\in\well\setminus\QQ$. It is only left to show that
$\fixx(x)$ is countable when $x\in\QQ$. This follows immediately from
the following proposition, which completely describes $\fixy(y)$ and
$\fixx(x)$ when $x,y\in\QQ$.

\begin{proposition}\label{prop:fixrationals}~
 \begin{enumerate}
  \item If $y\in\QQ$, then
    \begin{equation*}
      \fixy(y) = \set*{x\in\QQ : (\ZZ x + y)\cap \ZZ \neq \emptyset}.
    \end{equation*}
  \item If $x\in\QQ$, then
    \begin{equation*}
      \fixx(x) = \set*{y\in\QQ : (\ZZ x + y)\cap \ZZ \neq \emptyset}.
    \end{equation*}
  \end{enumerate}
\end{proposition}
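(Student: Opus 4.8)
The plan is to use Lemma~\ref{lem:SPis}, which identifies membership in $\Pi$ with finiteness of the sums $S_\ell(x,y)=\sum_{n\geq\ell}\min_{\ell\leq m\leq n}\norm{mx+y}$ (recall $d=1$ throughout this section), and to split the argument according to whether $x$ is rational. Recall that $(x,y)\in\Pi$ iff $S_\ell(x,y)<\infty$ for every $\ell\in\NN$; in particular, exhibiting a single $\ell$ with $S_\ell(x,y)=\infty$ already gives $(x,y)\notin\Pi$.

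First I would dispose of the case $x\in\QQ$, which settles part (2) entirely and the "rational $x$" portion of part (1). If $(\ZZ x+y)\cap\ZZ\neq\emptyset$ — which forces $y\in\QQ$ once $x\in\QQ$ — then $nx+y\in\ZZ$ for all $n$ in an arithmetic progression, so $\norm{nx+y}=0$ for infinitely many $n\in\NN$; hence each $S_\ell(x,y)$ is a finite sum and $(x,y)\in\Pi$. If instead $(\ZZ x+y)\cap\ZZ=\emptyset$, then since $x\in\QQ$ the quantity $\norm{nx+y}$ takes only finitely many values, all positive, so $\norm{nx+y}\geq\delta>0$ for every $n$ and $S_1(x,y)=\infty$, whence $(x,y)\notin\Pi$. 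Together these give, for $x\in\QQ$, the equivalence $(x,y)\in\Pi\iff(\ZZ x+y)\cap\ZZ\neq\emptyset$.

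It remains to show that for $y=a/b\in\QQ$ no \emph{irrational} $x$ lies in $\fixy(y)$. The key reduction is the elementary inequality $\norm{b\beta}\leq b\norm{\beta}$, valid for all $\beta\in\RR$; applying it with $\beta=mx+a/b$, so that $b\beta=bmx+a$ and $\norm{b\beta}=\norm{m(bx)}$, gives $\norm{mx+a/b}\geq\tfrac1b\norm{m(bx)}$ for every $m\in\NN$, and therefore
\[
  S_1(x,a/b)\;\geq\;\frac1b\sum_{n\geq 1}\min_{1\leq m\leq n}\norm{m(bx)}.
\]
So it suffices to prove that $\Sigma(\theta):=\sum_{n\geq 1}\min_{1\leq m\leq n}\norm{m\theta}=\infty$ for every irrational $\theta$, and then apply this with $\theta=bx$ (irrational since $x$ is). This is where continued fractions enter: if $(q_k)_{k\geq 0}$ denote the denominators of the convergents of $\theta$, then by the best-approximation property of convergents $\min_{1\leq m\leq n}\norm{m\theta}=\norm{q_k\theta}$ whenever $q_k\leq n<q_{k+1}$, so that
\[
  \Sigma(\theta)=\sum_{k\geq 0}\norm{q_k\theta}\,(q_{k+1}-q_k)\;>\;\sum_{k\geq 0}\frac{q_{k+1}-q_k}{q_k+q_{k+1}}\;\geq\;\frac12\sum_{k\geq 0}\parens*{1-\frac{q_k}{q_{k+1}}},
\]
using $\norm{q_k\theta}>(q_k+q_{k+1})^{-1}$ and $q_k\leq q_{k+1}$. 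Since $\prod_{k=0}^{K}\tfrac{q_k}{q_{k+1}}=\tfrac{1}{q_{K+1}}\to 0$, and an infinite product of numbers in $(0,1]$ vanishes exactly when $\sum_k(1-\tfrac{q_k}{q_{k+1}})$ diverges, we conclude $\Sigma(\theta)=\infty$, hence $(x,a/b)\notin\Pi$. Combining this with the rational-$x$ analysis above gives $\fixy(a/b)=\set{x\in\QQ:(\ZZ x+a/b)\cap\ZZ\neq\emptyset}$, which is part (1).

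I expect the last step to be the crux: the divergence of $\Sigma(\theta)$ relies on the one-dimensional theory of best rational approximations (the identification of $\min_{m\leq n}\norm{m\theta}$ with $\norm{q_k\theta}$ and the two-sided bounds on $\norm{q_k\theta}$), for which there is no ready higher-dimensional substitute — precisely the reason the proposition is stated only for $d=1$. The reduction $\norm{mx+a/b}\geq\tfrac1b\norm{m(bx)}$ and the manipulations of $S_\ell$ are routine by comparison.
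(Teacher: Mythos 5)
Your proof is correct, and while your rational--rational analysis coincides with the paper's, you handle the remaining case ($y\in\QQ$, $x$ irrational) by a genuinely more direct route. The paper clears the denominator of $y$ at the level of sets: Lemma~\ref{lem:diagtimesPi} gives $\fixy(y)\subseteq\fixy(qy)=\fixy(0)$ by periodicity, and Lemma~\ref{lem:fixy0} (resting on Lemma~\ref{lem:xirrationaly}) identifies $\fixy(0)=\QQ$. You instead clear the denominator inside the sum, using $\norm{mx+a/b}\geq\tfrac1b\norm{m(bx)}$ to get $S_1(x,a/b)\geq\tfrac1b S_1(bx,0)$ --- the same elementary inequality $\norm{u\beta}\leq u\norm{\beta}$ that powers Lemma~\ref{lem:diagtimesPi}, but applied directly to $S_1$, so you bypass Lemmas~\ref{lem:diagtimesPi} and~\ref{lem:fixy0} altogether. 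Your divergence argument for $S_1(\theta,0)=\sum_k\norm{q_k\theta}(q_{k+1}-q_k)$ with $\theta$ irrational is also cleaner than the one in Lemma~\ref{lem:xirrationaly}: where the paper splits into the case of infinitely many partial quotients greater than $1$ and the golden-ratio-tail case, you note that $\prod_{k\leq K} q_k/q_{k+1}=1/q_{K+1}\to 0$ forces $\sum_k\parens*{1-q_k/q_{k+1}}=\infty$, which treats all irrationals uniformly; combined with $\norm{q_k\theta}>(q_k+q_{k+1})^{-1}$ and the best-approximation property this is complete. What the paper's packaging buys is reusability --- Lemmas~\ref{lem:diagtimesPi} and~\ref{lem:xirrationaly} are invoked again in Lemma~\ref{lem:Piy01} and Lemma~\ref{lem:yirrational} --- whereas yours buys a shorter, self-contained proof of this particular proposition.
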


In the proof, we will use the next two lemmas about $S_\ell$. 

\begin{lemma}\label{lem:invariance}
  $S_{\ell+k}(x,y) = S_\ell(x, kx + y)$.
\end{lemma}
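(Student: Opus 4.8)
The plan is to prove the identity by unwinding the definition of $S_\ell$ on both sides and performing a single change of summation index. First I would expand the right-hand side:
\begin{equation*}
  S_\ell(x, kx+y) = \sum_{n\geq \ell}\min_{\ell\leq m\leq n}\norm{mx + (kx+y)}^d = \sum_{n\geq \ell}\min_{\ell\leq m\leq n}\norm{(m+k)x + y}^d.
\end{equation*}
Then I would substitute $m' = m+k$ in the inner minimum, turning the constraint $\ell\leq m\leq n$ into $\ell+k\leq m'\leq n+k$, and substitute $n' = n+k$ in the outer sum, turning $n\geq\ell$ into $n'\geq\ell+k$. Relabeling $m',n'$ back to $m,n$ gives
\begin{equation*}
  S_\ell(x, kx+y) = \sum_{n\geq \ell+k}\min_{\ell+k\leq m\leq n}\norm{mx+y}^d = S_{\ell+k}(x,y),
\end{equation*}
which is exactly the claimed equality.

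The only points that need a word of justification are that $m\mapsto m+k$ is a bijection of $\ZZ$ (so the reindexing neither loses nor creates terms) and that it preserves the order relation $m\leq n$ (so the shape of the constraint in the nested $\min$/$\sum$ is unchanged); both are immediate. No convergence issues arise, since the inner minimum is over a finite set and the outer sum is interpreted in $[0,\infty]$, so the identity is an honest equality of extended reals regardless of whether either side is finite. For the same reason the statement holds for every $\ell,k\in\ZZ$, although in the applications only the case $\ell,\ell+k\in\NN$ is actually used, via Lemma~\ref{lem:SPis}.

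I do not expect any real obstacle here: this is a bookkeeping lemma, and combined with Lemma~\ref{lem:ellplusone} it is exactly the tool that lets one transfer membership in $\Pi$ between the fibers $\fixx(x)$ at points of $\RR^d$ differing by an element of $\ZZ x$. If there is a pitfall at all it is purely notational — keeping the dummy indices of the nested $\min$ and $\sum$ distinct from the shift $k$ while relabeling.
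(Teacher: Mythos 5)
Your proof is correct and is essentially the paper's own argument: both amount to the index shift $m\mapsto m+k$, $n\mapsto n+k$ (the paper performs it starting from $S_{\ell+k}(x,y)$ with $\tilde m = m-k$, $\tilde n = n-k$, you start from $S_\ell(x,kx+y)$ with the inverse shift). Your added remarks on bijectivity, order preservation, and the identity holding in $[0,\infty]$ are fine but not needed beyond what the paper records.
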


\begin{proof}
  We have
  \begin{align*}
    S_{\ell+k}(x,y) &= \sum_{n\geq \ell + k}\parens*{\min_{\ell + k \leq m \leq n}\norm*{mx+y}}\\
                    &= \sum_{n\geq \ell + k}\parens*{\min_{\ell + k \leq m \leq n}\norm*{(m-k)x+ (kx +y)}}\\
    \intertext{and letting $\tilde n = n-k$ and $\tilde m= m-k$,}
                    &= \sum_{\tilde n\geq \ell}\parens*{\min_{\ell \leq \tilde m \leq \tilde n}\norm*{\tilde m x+ (kx +y)}}\\
                    &= S_\ell(x,kx+y),
  \end{align*}
  as claimed.
\end{proof}

\begin{lemma}\label{lem:xirrationaly}
  If $x \in\RR\setminus\QQ$ and $y\in\RR$ and $k\in\ZZ$ is such that
  $kx+y\in\ZZ$, then
  \begin{equation*}
    S_\ell(x,y) =
    \begin{cases}
      0 &\text{if } \ell \leq k \\
      \infty &\text{if } \ell > k.
    \end{cases}
  \end{equation*}
  In particular,
  \begin{equation*}
    S_\ell(x,0) =
    \begin{cases}
      0 &\text{if } \ell \leq 0 \\
      \infty &\text{if } \ell > 0
    \end{cases}
  \end{equation*}
  for any irrational $x$.
\end{lemma}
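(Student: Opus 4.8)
The plan is to reduce to the homogeneous point $y=0$ using Lemma~\ref{lem:invariance}, dispose of the part $\ell\le k$ by inspection, and establish the part $\ell>k$ via continued fractions (legitimate, and indeed typical, in this $d=1$ section). For the reduction, apply Lemma~\ref{lem:invariance} with the shift $k$: one has $S_\ell(x,y)=S_{(\ell-k)+k}(x,y)=S_{\ell-k}(x,kx+y)$, and since $kx+y\in\ZZ$ while $\norm{mx+(kx+y)}=\norm{mx}$ for every $m$ (distance to $\ZZ$ is invariant under integer translation), this equals $S_{\ell-k}(x,0)$. So $S_\ell(x,y)=S_{\ell-k}(x,0)$, and writing $j=\ell-k$ it suffices to prove that $S_j(x,0)=0$ for $j\le 0$ and $S_j(x,0)=\infty$ for $j\ge 1$.

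The case $j\le 0$ is immediate: for every $n\ge 0$ the index $m=0$ is admissible in the inner minimum defining the $n$th term of $S_j(x,0)$, and it contributes $\norm{0\cdot x}=0$, so every term vanishes. For $j\ge 1$, enlarging the index set can only decrease a minimum, so $\min_{j\le m\le n}\norm{mx}\ge\min_{1\le m\le n}\norm{mx}$; hence $S_j(x,0)$ differs from $S_1(x,0)$ by at most the finite quantity $\sum_{n=1}^{j-1}\min_{1\le m\le n}\norm{mx}$, and it is enough to prove $S_1(x,0)=\infty$.

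To see this, let $(q_k)$ be the (eventually strictly increasing) sequence of denominators of the convergents of $x$; by the classical theory of continued fractions these are exactly the ``record'' times of $m\mapsto\norm{mx}$, so $\min_{1\le m\le n}\norm{mx}=\norm{q_k x}$ whenever $q_k\le n<q_{k+1}$. Regrouping the series by these blocks gives $S_1(x,0)=\sum_k (q_{k+1}-q_k)\,\norm{q_k x}$. Using the identity $q_{k+1}\norm{q_k x}+q_k\norm{q_{k+1}x}=1$ together with the estimate $q_k\norm{q_{k+1}x}\le q_k/q_{k+2}\le\tfrac12$ gives $\norm{q_k x}\ge 1/(2q_{k+1})$, hence $S_1(x,0)\ge\tfrac12\sum_k (1-q_k/q_{k+1})$. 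This last series diverges: if it converged we would have $q_k/q_{k+1}\to 1$, but $q_{k+1}\ge q_k+q_{k-1}$ gives $q_k/q_{k+1}\le 1/(1+q_{k-1}/q_k)$, which forces $q_{k-1}/q_k\to 0$, contradicting $q_{k-1}/q_k\to 1$. Therefore $S_1(x,0)=\infty$, and the lemma follows.

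I expect all of the real content to be in the divergence of $S_1(x,0)$ for irrational $x$; the reduction and the case $j\le 0$ are pure bookkeeping. Within that step the key points are recognizing that the partial minima are constant precisely on the inter-convergent blocks $[q_k,q_{k+1})$, and extracting a clean uniform lower bound for $(q_{k+1}-q_k)\norm{q_k x}$ — the identity $q_{k+1}\norm{q_k x}+q_k\norm{q_{k+1}x}=1$ is what makes the latter painless. The only fussy point is the behaviour at the first one or two convergents (where $p_k$ need not be the nearest integer to $q_k x$ and $q_0$ may equal $q_1$), but that affects only finitely many terms and does not affect divergence.
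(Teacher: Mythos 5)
Your proof is correct, and its skeleton matches the paper's: reduce to $S_j(x,0)$ via Lemma~\ref{lem:invariance} and integer-translation invariance of $\norm{\cdot}$ (the paper does the same, citing Lemmas~\ref{lem:ellplusone} and~\ref{lem:invariance}; your termwise comparison replaces the use of Lemma~\ref{lem:ellplusone} for passing from $S_1=\infty$ to $S_j=\infty$), dispose of $j\le 0$ by the presence of $m=0$ in the minimum, and regroup $S_1(x,0)$ along the convergent denominators as $\sum_k(q_{k+1}-q_k)\norm{q_kx}$. Where you genuinely differ is the endgame. The paper bounds each term below by $1-\tfrac{2q_k}{q_k+q_{k+1}}>1-\tfrac{2}{a_{k+1}+1}$ and then splits into two cases: if infinitely many partial quotients are $\ge 2$, infinitely many terms exceed $1/3$; otherwise $x$ has a golden-ratio tail and the terms tend to $1-2\varphi^{-2}>0$. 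You instead extract the uniform bound $\norm{q_kx}\ge 1/(2q_{k+1})$ from the identity $q_{k+1}\abs{q_kx-p_k}+q_k\abs{q_{k+1}x-p_{k+1}}=1$, reduce to divergence of $\sum_k(1-q_k/q_{k+1})$, and rule out convergence by a soft contradiction: the terms tending to $0$ would force $q_{k-1}/q_k\to 1$, while $q_{k+1}\ge q_k+q_{k-1}$ forces $q_{k-1}/q_k\to 0$. This avoids the paper's case analysis entirely (and the identity is not even needed, since $\norm{q_kx}\ge 1/(q_k+q_{k+1})\ge 1/(2q_{k+1})$ is already standard). Two shared, harmless caveats: like the paper, you treat the first convergents informally, which is fine since only the tail matters for divergence; and for $j<0$ (equivalently $\ell<k$) the finitely many terms with $n<0$ need not vanish, so the value is merely finite rather than exactly $0$ --- a slip present in the paper's own statement and proof, and immaterial to every application, which only uses the finite/infinite dichotomy.
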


\begin{proof}
  The lemma reduces to the $y=0$ case after observing that $S_\ell$ is
  invariant by integer shifts in both coordinates, and applying
  Lemmas~\ref{lem:ellplusone} and~\ref{lem:invariance}.

  Let us therefore prove the ``in particular.'' Let $\ell \leq
  0$. Then
  \begin{equation*}
    \min_{\ell\leq m\leq n}\norm{mx} = 0
  \end{equation*}
  for all $n\geq \ell$, and so $S_\ell(x,0)=0$.

  Now note that
  \begin{equation*}
    S_1(x,0) = \sum_{n \geq 1}\parens*{\min_{1 \leq m \leq n}\norm{mx}} = \sum_{n\geq 0}\norm{q_nx}\parens{q_{n+1} - q_n}
  \end{equation*}
  where $q_n$ denotes a convergent of the continued fraction
  $x=[a_0; a_1, a_2, \dots]$. This is bounded by
  \begin{align}
    \sum_{n \geq 0}\norm{q_nx}\parens{q_{n+1} - q_n}    
    &\geq \sum_{n \geq 0}\frac{q_{n+1} - q_n}{q_{n+1}  + q_n} \nonumber\\
    &\geq \sum_{n \geq 0}\parens*{1 - \frac{2q_n}{q_{n+1}  + q_n}} \label{eq:marker} \\
    &> \sum_{n \geq 0}\parens*{1 - \frac{2}{(a_{n+1}+1)}}, \nonumber
  \end{align}
  and if there are infinitely many partial quotients greater than $1$,
  then there will be infinitely many terms bounded below by $1/3$, and
  the sum will diverge.

  It is only left to treat the case where
  $x = [a_0; a_1, \dots, a_M, \overline{1}]$. Here we can
  bound~(\ref{eq:marker}) by
  \begin{align*}
    \sum_{n\geq 0}\parens*{1 - \frac{2q_n}{q_{n+1}  + q_n}} &\geq \sum_{n\geq M}\parens*{1 - \frac{2q_n}{q_{n+1}  + q_n}} \\
                                                            &= \sum_{n\geq M}\parens*{1 - \frac{2q_n}{q_{n+2}}} \\
                                                            &= \sum_{n\geq M}\parens*{1 - 2\parens*{\frac{q_n}{q_{n+1}}}\parens*{\frac{q_{n+1}}{q_{n+2}}}}.
  \end{align*}
  Now, since
  $\lim_{n\to\infty}\parens*{\frac{q_n}{q_{n+1}}}=\varphi^{-1}$, where
  $\varphi$ is the golden ratio, we have that
  \begin{equation*}
    \parens*{1 - 2\parens*{\frac{q_n}{q_{n+1}}}\parens*{\frac{q_{n+1}}{q_{n+2}}}}\to 1-\frac{2}{\varphi^2},
  \end{equation*}
  which is positive, hence the sum diverges.

  Now, by Lemma~\ref{lem:ellplusone}, we have that
  $S_\ell (x,0)=\infty$ for all $\ell \geq 1$.
\end{proof}

The following special case of Proposition~\ref{prop:fixrationals}
tells us that the intersections of all sets coming from the divergence
case of Khintchine's Theorem is exactly $\QQ$. This is an interesting
fact in itself. 

\begin{lemma}\label{lem:fixy0}
  Let $d=1$. Then $\fixy(0)=\QQ$. 
\end{lemma}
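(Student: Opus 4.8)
The plan is to read off $\fixy(0)$ directly from the two facts already established about the sums $S_\ell$. Unwinding the definitions, $\fixy(0) = \set*{x\in\RR : (x,0)\in\Pi}$, so by Lemma~\ref{lem:SPis} we have $x\in\fixy(0)$ if and only if $S_\ell(x,0)<\infty$ for every $\ell\in\NN$. (Here $d=1$, so $S_\ell(x,y)=\sum_{n\geq\ell}\min_{\ell\leq m\leq n}\norm{mx+y}$.) So the whole problem reduces to deciding, for each $x$, whether all the $S_\ell(x,0)$ converge.

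First I would handle the inclusion $\fixy(0)\subseteq\QQ$. If $x$ is irrational, then the ``in particular'' clause of Lemma~\ref{lem:xirrationaly} gives $S_1(x,0)=\infty$, whence $(x,0)\notin\Pi$ by Lemma~\ref{lem:SPis}, i.e.\ $x\notin\fixy(0)$. Thus every element of $\fixy(0)$ is rational.

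For the reverse inclusion $\QQ\subseteq\fixy(0)$, fix a rational $x=p/q$ with $q\in\NN$, so that $\norm{mx}=0$ whenever $q\mid m$. Given $\ell\in\NN$, let $m_0$ be the smallest multiple of $q$ that is at least $\ell$ (which exists since the multiples of $q$ are cofinal in $\NN$). Then for every $n\geq m_0$ the index $m_0$ lies in the range $\ell\leq m\leq n$, so $\min_{\ell\leq m\leq n}\norm{mx}=0$; hence $S_\ell(x,0)=\sum_{n=\ell}^{m_0-1}\min_{\ell\leq m\leq n}\norm{mx}$ is a finite sum of finite terms, so $S_\ell(x,0)<\infty$. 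Since $\ell\in\NN$ was arbitrary, Lemma~\ref{lem:SPis} gives $x\in\fixy(0)$. Combining the two inclusions yields $\fixy(0)=\QQ$, which is exactly the $y=0$ instance of Proposition~\ref{prop:fixrationals}(1), since $0\in(\ZZ x+0)\cap\ZZ$ for every $x$.

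I do not expect a genuine obstacle: with Lemmas~\ref{lem:SPis} and~\ref{lem:xirrationaly} in hand the argument is immediate. The only point requiring a moment's care is that finiteness of $S_\ell(x,0)$ for a rational $x$ must be verified for \emph{all} $\ell\in\NN$, not merely $\ell=1$; this causes no trouble because the multiples of the denominator $q$ are cofinal in $\NN$, so from some point on every partial minimum vanishes.
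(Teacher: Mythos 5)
Your proof is correct and follows essentially the same route as the paper: the inclusion $\fixy(0)\subseteq\QQ$ is identical (Lemma~\ref{lem:xirrationaly} gives $S_1(x,0)=\infty$ for irrational $x$, then Lemma~\ref{lem:SPis}). For the easy inclusion $\QQ\subseteq\fixy(0)$ the paper simply observes that $\norm{nx}=0$ infinitely often puts $(x,0)$ in every $W(\psi)$ directly, whereas you route it through finiteness of all $S_\ell(x,0)$ and Lemma~\ref{lem:SPis}; both are valid and the difference is cosmetic.
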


\begin{proof}
  Notice that if $x\in\QQ$, then $(x,0)\in W(\psi)$ for any
  $\psi\in\calD$, because we have $\norm{nx}=0$ for infinitely many
  $n$. Therefore, $\fixy(0)\supseteq\QQ$. On the other hand, if
  $x\in\RR\setminus\QQ$, then $S_1(x,0)=\infty$, by
  Lemma~\ref{lem:xirrationaly}. So $x\notin\fixy(0)$ by
  Lemma~\ref{lem:SPis}. Therefore $\fixy(0)=\QQ$ as claimed.
\end{proof}

\begin{proof}[Proof of Proposition~\ref{prop:fixrationals}]
  If $x,y$ are rational and $\ZZ x + y$ contains no integers, then the
  set $\set{nx+y\,(\bmod 1)}_{n\in\ZZ}$ is finite, hence the
  expression $\norm{nx+y}$ is bounded below uniformly over
  $n\in\ZZ$. Therefore $S_\ell(x,y)=\infty$ for all $\ell\in\NN$, and
  by Lemma~\ref{lem:SPis}, $x\notin\fixy(y)$ and $y\notin\fixx(x)$. (In
  fact, this argument also shows that $\fixx(x)$ contains no
  irrationals if $x$ is rational.) On the other hand if $\ZZ x + y$ contains integers,
  then $\norm{nx+y}=0$ on an arithmetic sequence of $n\in\NN$, so
  $S_\ell(x,y)<\infty$ for all $\ell\in\NN$, and by
  Lemma~\ref{lem:SPis}, $x\in\fixy(y)$ and $y\in\fixx(x)$.

  It is only left to prove that if $y\in\QQ$, then $\fixy(y)$ contains
  no irrationals. Write $y=p/q$ for some $p\in\ZZ$ and
  $q\in\NN$. Lemma~\ref{lem:diagtimesPi} implies that
  $\fixy(y)\subseteq \fixy(qy)=\fixy(p)$. And, by periodicity, we have
  $\fixy(p)=\fixy(0)$. But Lemma~\ref{lem:fixy0} states that
  $\fixy(0)=\QQ$, hence $\fixy(y)\subseteq\QQ$. 
\end{proof}

The following corollary is immediate.

\begin{corollary}\label{cor:bothratorirr}
  If $(x,y)\in\Pi$, then either $x$ and $y$ are both rational, or they
  are both irrational.
\end{corollary}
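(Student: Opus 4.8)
The plan is to read off the contrapositive from Proposition~\ref{prop:fixrationals} together with Corollary's two fiber descriptions, in the case $d=1$. Suppose $(x,y)\in\Pi$. There are two mixed cases to rule out: $x$ rational with $y$ irrational, and $x$ irrational with $y$ rational. I would handle each by invoking the part of Proposition~\ref{prop:fixrationals} that pins down the relevant fiber.

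First, suppose $x\in\QQ$ and $y\in\RR\setminus\QQ$. Then $(x,y)\in\Pi$ means $y\in\fixx(x)$. But part~(2) of Proposition~\ref{prop:fixrationals} says that for $x\in\QQ$ we have $\fixx(x)=\set{y\in\QQ:(\ZZ x+y)\cap\ZZ\neq\emptyset}$, which consists only of rationals. This contradicts $y\notin\QQ$. (Alternatively, the parenthetical remark inside the proof of Proposition~\ref{prop:fixrationals} already records that $\fixx(x)$ contains no irrationals when $x$ is rational; that remark is exactly this step.)

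Second, suppose $x\in\RR\setminus\QQ$ and $y\in\QQ$. Then $(x,y)\in\Pi$ means $x\in\fixy(y)$. Part~(1) of Proposition~\ref{prop:fixrationals} says that for $y\in\QQ$ we have $\fixy(y)=\set{x\in\QQ:(\ZZ x+y)\cap\ZZ\neq\emptyset}\subseteq\QQ$, contradicting $x\notin\QQ$. Hence neither mixed case can occur, so $x$ and $y$ are either both rational or both irrational, which is the claim.

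There is essentially no obstacle here; the corollary is a direct bookkeeping consequence of Proposition~\ref{prop:fixrationals}, and the only thing to be careful about is which of the two fibers ($\fixx$ or $\fixy$) one should look at in each mixed case. One could alternatively give a self-contained one-line argument: if exactly one of $x,y$ is rational, then $\set{nx+y\,(\bmod\,1)}_{n\in\NN}$ is infinite and bounded away from $\ZZ$ (since it cannot accumulate at $0$: an accumulation at $0$ would force $x$ rational when $y$ is rational, and for $x$ irrational it would force $y$ to lie in $\ZZ x+\ZZ$, hence be irrational), so $S_\ell(x,y)=\infty$ for all $\ell$ and Lemma~\ref{lem:SPis} gives $(x,y)\notin\Pi$; but invoking Proposition~\ref{prop:fixrationals} is cleaner since that bookkeeping was already done there.
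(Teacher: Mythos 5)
Your main argument is correct and is essentially the paper's own: the corollary is presented there as immediate from Proposition~\ref{prop:fixrationals}, and your two mixed-case eliminations (using part~(2) to see $\fixx(x)\subseteq\QQ$ when $x\in\QQ$, and part~(1) to see $\fixy(y)\subseteq\QQ$ when $y\in\QQ$) are exactly that deduction spelled out.

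One caution: the ``self-contained one-line argument'' you offer as an alternative is not correct in the mixed case $x\in\RR\setminus\QQ$, $y\in\QQ$. There the orbit $\set{nx+y\,(\bmod\,1)}_{n\in\NN}$ is dense in $[0,1)$ (irrational rotation), so it certainly accumulates at $0$; accumulation at $0$ does not force $y\in\ZZ x+\ZZ$, contrary to your parenthetical. In that case $(x,y)\notin\Pi$ holds not because $\norm{nx+y}$ is bounded away from zero, but because $S_\ell(x,y)$ diverges even though the terms tend to zero along a subsequence; establishing this is precisely the nontrivial content of Lemma~\ref{lem:xirrationaly} (the continued-fraction estimate), fed through Lemma~\ref{lem:fixy0} and Lemma~\ref{lem:diagtimesPi} in the proof of Proposition~\ref{prop:fixrationals}. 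So keep the appeal to the proposition as your proof and discard the proposed shortcut.
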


\begin{lemma}\label{lem:yirrational}
  If $1, x,y$ are rationally dependent and $x$ (or $y$) is irrational,
  then $(x,y)\notin\Pi$.
\end{lemma}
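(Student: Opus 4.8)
The plan is to bootstrap from the special case already handled by Lemma~\ref{lem:xirrationaly}, which disposes of rational dependences of the particular form $kx+y\in\ZZ$ with $k\in\ZZ$, up to the general rational dependence $a+bx+cy=0$. First I would dispose of the mixed case: if exactly one of $x,y$ is irrational, then Corollary~\ref{cor:bothratorirr} already gives $(x,y)\notin\Pi$, so from now on I assume both $x$ and $y$ are irrational. Fix integers $a,b,c$, not all zero, with $a+bx+cy=0$. Since $x\notin\QQ$, the coefficient $c$ cannot vanish (otherwise $a+bx=0$ would force $x$ rational, or force all three coefficients to be zero), and after replacing $(a,b,c)$ by $(-a,-b,-c)$ if necessary I may assume $c\in\NN$; thus $bx+cy=-a\in\ZZ$.

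I would then argue by contradiction: suppose $(x,y)\in\Pi$. Lemma~\ref{lem:diagtimesPi} (with $v=1$ and $u=c$) gives $(x,cy)\in\Pi$. But $b\cdot x+(cy)=-a\in\ZZ$, so Lemma~\ref{lem:xirrationaly}, applied to the irrational $x$, the shift $cy$, and the integer $k=b$, shows that $S_\ell(x,cy)=\infty$ for every $\ell>b$. Picking any such $\ell\in\NN$ (say $\ell=\max\set{b,0}+1$) and applying Lemma~\ref{lem:SPis} yields $(x,cy)\notin\Pi$, a contradiction. Hence $(x,y)\notin\Pi$, as desired.

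I do not anticipate a real obstacle: each step is either an appeal to a result proved above or a line of elementary arithmetic. The only genuinely substantive point is recognizing that one should dilate in the inhomogeneous coordinate by exactly $c$, since $bx+cy$---rather than $\tfrac{b}{c}x+y$---is the combination that is pinned to $\ZZ$, and that this dilation is legal by Lemma~\ref{lem:diagtimesPi}; arranging $c>0$ in advance is what lets that lemma apply.
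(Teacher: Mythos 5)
Your proposal is correct and follows essentially the same route as the paper: apply Lemma~\ref{lem:xirrationaly} to the pair $(x,cy)$ (the paper's $k_2y$), conclude $S_\ell(x,cy)=\infty$ via that lemma, rule out membership in $\Pi$ by Lemma~\ref{lem:SPis}, and transfer back to $y$ using the dilation $u\cdot\fixx(x)\subseteq\fixx(x)$ from Lemma~\ref{lem:diagtimesPi}. Your version is in fact slightly more careful than the paper's, since you arrange $c>0$ so that Lemma~\ref{lem:diagtimesPi} applies literally and you dispose of the mixed rational/irrational case explicitly via Corollary~\ref{cor:bothratorirr}, where the paper simply asserts both coordinates are irrational.
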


\begin{proof}
  Suppose $k_1x + k_2 y + k_3 = 0$, with
  $(k_1,k_2,k_3)\in\ZZ^3\setminus\set{(0,0,0)}$. Notice that if either
  $x$ or $y$ is irrational, then they both are. Then, by
  Lemma~\ref{lem:xirrationaly}, $S_\ell(x,k_2y)=\infty$ for all
  sufficiently large $\ell$ and therefore $k_2y\notin\fixx(x)$, by
  Lemma~\ref{lem:SPis}. But Lemma~\ref{lem:diagtimesPi} tells us that
  $k_2\cdot\fixx(x)\subseteq \fixx(x)$, therefore
  $y\notin\fixx(x)$. The lemma follows.
\end{proof}

Lemma~\ref{lem:yirrational} raises the question of whether $\Pi$
includes all $(x,y)$ such that $1,x,y$ are rationally independent. The
answer is ``no.'' For any $x$, the set of $y$ such that $1, x, y$ are
rationally independent has full measure. But by Theorem~\ref{thm2}
the set $\fixx(x)$ has measure $0$.


\bibliographystyle{plain}

\bibliography{../bibliography}

\end{document}